\title[Topologically slice knots that are not smoothly slice in any definite 4-manifold]{Topologically slice knots that are not smoothly slice in any definite 4-manifold}
\author{Kouki Sato}
\date{}
\newtheorem{thm}{Theorem}
\newtheorem{prop}{Proposition}
\newtheorem{lem}{Lemma}
\theoremstyle{definition}
\newtheorem*{remark}{Remark}
\newtheorem*{acknowledge}{Acknowledgements}
\DeclareMathOperator{\Wh}{Wh}
\begin{document}
\maketitle

\begin{abstract}
We prove that there exist infinitely many topologically slice knots 
which cannot bound a smooth null-homologous disk
in any definite 4-manifold. Furthermore, we show that we can take such knots so that they are linearly independent in the the knot concordance group.    
\end{abstract}

\section{Introduction}

A knot $K$ in $S^3$ is called {\it smoothly slice} ({\it topologically slice})
if $K$ bounds a smooth disk (resp.\ topologically locally flat disk) in $B^4$.
While any smoothly slice knot is obviously topologically slice,
it has been known that there exist infinitely many topologically slice knots that are not smoothly slice (for instance, see \cite{endo, gompf}).
The purpose of this paper is to prove that there exist infinitely many topologically slice knots
which cannot bound a null-homologous smooth disk not only in $B^4$ but also in any 4-manifold with definite intersection form. 

For a 4-manifold $V$ with boundary $S^3$, we call a knot $K$ in $S^3$ {\it 
smoothly slice in} $V$ 
if $K$ bounds a smooth disk $D$ in $V$  such that 
$[D, \partial D] = 0 \in H_2(V,\partial V; \mathbb{Z})$.
We call a 4-manifold $V$ {\it definite} if the intersection form of $V$ is either positive definite or negative definite. We denote the smooth knot concordance group by $\mathcal{C}$.
Then our main theorem is stated as follows.

\begin{thm}
\label{thm1}
There exist infinitely many topologically slice knots which are not slice in any definite 4-manifold. Furthermore, we can take such knots so that they are linearly independent in 
$\mathcal{C}$.
\end{thm}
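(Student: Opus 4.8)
The plan is to split the proof into an obstruction and a construction. For the obstruction I would prove: if a knot $K$ bounds a null-homologous smooth disk $D$ in a negative-definite $4$-manifold $V$ with $\partial V=S^3$, then $\nu^+(K)=0$; dually, bounding such a disk in a positive-definite manifold forces $\nu^+(\bar K)=0$, where $\bar K$ denotes the mirror reverse. Consequently, a knot slice in \emph{some} definite $4$-manifold satisfies $\nu^+(K)=0$ or $\nu^+(\bar K)=0$; equivalently, its $\nu^+$-equivalence class is comparable with that of the unknot. It therefore suffices to produce infinitely many, linearly independent in $\mathcal C$, topologically slice knots $K$ with $\nu^+(K)>0$ \emph{and} $\nu^+(\bar K)>0$.

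For the obstruction, set $W:=V\setminus\nu(D)$. Since $[D,\partial D]=0$, a Mayer--Vietoris argument gives $H_2(W)\cong H_2(V)$ with the same negative-definite form, $H_1(W)\cong\mathbb Z$ generated by a meridian $\mu$ of $D$ (which restricts on the boundary to a meridian of $K$), and $\partial W=S^3_0(K)$. Capping $V$ off with $B^4$ yields a closed smooth negative-definite $4$-manifold, so by Donaldson's diagonalization theorem $Q_V$, hence $Q_W$, is the diagonal form $\langle-1\rangle^{\oplus m}$. Attach a $(-1)$-framed $2$-handle to $W$ along $\mu$: a slam-dunk identifies the new boundary with $S^3_{+1}(K)$, and since $\mu$ is non-trivial in $H_1$ the attachment kills $b_1$ without changing the intersection form. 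The result $W_1$ is thus a smooth negative-definite $4$-manifold with $b_1=0$, $Q_{W_1}=\langle-1\rangle^{\oplus m}$ and $\partial W_1=S^3_{+1}(K)$. Applying the Ozsv\'ath--Szab\'o inequality for negative-definite fillings to a characteristic covector of maximal square $-m$ gives $0\le 4\,d(S^3_{+1}(K))$, and combining this with $d(S^3_{+1}(K))=-2V_0(K)\le 0$ forces $V_0(K)=0$, i.e.\ $\nu^+(K)=0$. (If $V$ is moreover spin, then by Donaldson/Furuta it is a homology ball, so $K$ bounds a disk in a homology ball and $K\nuplus U$, which already kills every invariant in sight; this is the easy case.)

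For the construction I would avoid Whitehead doubles as the knots themselves: any Whitehead double $\Wh(J)$ has thin knot Floer homology, hence is $\nu^+$-equivalent to some $T_{2,2k\pm1}$ or to $U$, and connected sums of such knots stay comparable with $U$, so they never yield a two-sided obstruction. Instead I would build the $K_n$ from winding-number-zero satellite operations applied to torus knots of both signs: winding number zero keeps the Alexander polynomial equal to $1$, so the $K_n$ are topologically slice by Freedman's theorem, while choosing the pattern rich enough — Whitehead-type clasps entering as ingredients inside the pattern — makes the satellite formula for $CFK^\infty$ retain a positive staircase summand whose mirror is again a positive staircase, giving $\nu^+(K_n)>0$ and $\nu^+(\bar K_n)>0$. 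One then reads off the $\nu^+$-equivalence class of each $K_n$ explicitly.

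Finally, for linear independence I would arrange the family so that the associated $\nu^+$-equivalence classes (refining to the local-equivalence class of $CFK^\infty$ if necessary) are linearly independent, typically by letting the staircase parameters of $K_n$ grow fast enough that in any finite connected sum $\#_{n}a_nK_n$ a single summand dominates and cannot be absorbed; then $\nu^+$ of that sum, or of its mirror, is positive, so the sum is not slice in any definite $4$-manifold, and a fortiori not smoothly slice. The principal difficulty is the third step — exhibiting topologically slice knots that are genuinely two-sided for $\nu^+$ and controlling their $\nu^+$-equivalence classes well enough to extract independence; the obstruction and the reduction to it are comparatively routine once Donaldson's theorem is brought in.
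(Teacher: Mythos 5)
Your obstruction half is correct and is essentially the paper's Lemmas \ref{lem1}--\ref{lem3} and Proposition \ref{prop4} in a mildly different dress: instead of excising the disk and attaching a $(-1)$-framed handle to the meridian, the paper attaches a $(+1)$-framed handle to $V$ along $K$ and splits off a $\mathbb{C}P^2$, then quotes \cite[Corollary 9.8]{ozsvath-szabo}; your appeal to Donaldson diagonalization is unnecessary for this step but harmless. Your reduction is even slightly stronger than the paper's (you only need $\nu^+(K)>0$ and $\nu^+(K^*)>0$, whereas the paper uses $V_0(K)>0$ and $\tau(K)<0$, which implies your condition via $\nu^+(K^*)\geq\tau(K^*)=-\tau(K)$).

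The genuine gap is the construction, which you yourself flag as the principal difficulty and then do not carry out: no explicit knots are given, and the assertion that a ``rich enough'' winding-number-zero pattern will produce a two-sided positive staircase is exactly the statement that needs proof. Worse, the reason you give for abandoning the route the paper actually takes is based on a false premise: Whitehead doubles do not have thin knot Floer homology, and the relevant input (from \cite{hedden-kim-livingston}) is only that $\#_{i=1}^{k}\Wh(T_{2,3})$ is $\nu^+$-equivalent to $T_{2,2k+1}$. The paper gets the two-sided obstruction precisely from such knots by inserting a $(p,1)$-cable: for $K_n=(\#_{i=1}^{3}\Wh(T_{2,3}))\#((\Wh(T_{2,3}))_{n+3,1})^*$, Wu's formula gives $V_0((\Wh(T_{2,3}))_{n+3,1})=V_0(\Wh(T_{2,3}))=1$ while Hedden's cabling formula gives $\tau((\Wh(T_{2,3}))_{n+3,1})=n+3$; hence $V_0(K_n)\geq\lceil 3/2\rceil-1=1>0$ and $\tau(K_n)=-n<0$, which is exactly your condition $\nu^+(K_n)>0$ and $\nu^+(K_n^*)>0$. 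So $(p,1)$-cables of Whitehead doubles are not a dead end --- they are the construction. Your linear independence step is likewise only a strategy statement; the paper discharges it by feeding the same cables into Kim--Park's homomorphism $\phi:\mathcal{C}\to\mathbb{Z}^{\infty}$, whose values on $[(\Wh(T_{2,3}))_{n+3,1}]$ have a leading $1$ in a coordinate that grows with $n$. As it stands, your proposal proves the obstruction but not the theorem.
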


In order to prove Theorem \ref{thm1}, we use the Heegaard Floer $\tau$-invariant and the $V_k$-invariants  defined in \cite{ni-wu}. In particular, by combining 
Wu's cabling formula \cite{wu} and 
Bodn\'{a}r-Celoria-Golla's connected sum formula \cite{bodnar-celoria-golla} for $V_k$-invariants, we prove the following proposition.
Here we denote the mirror image of  a knot $K$ by $K^*$, the $(n,1)$-cable of $K$ 
by $K_{n,1}$ and the connected sum of two knots $K$ and $J$ by $K \# J$.
\begin{prop}
\label{prop2}
Let $K$ and $J$ be knots. If $V_0(K) > V_0(J)$ and $\tau(K), \tau(J)>0$,
then for any positive integer $n$ with $\tau(K) < n \cdot \tau(J)$, the knot $K\#(J_{n,1})^*$ is not slice in any definite 4-manifold.
\end{prop}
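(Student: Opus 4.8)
The plan is to reduce the statement to two positivity assertions about the $V_0$-invariant and then verify them with the cabling and connected-sum formulas. The starting point, which I would cite or prove first, is a slice obstruction for definite 4-manifolds: if a knot $L$ bounds a smooth null-homologous disk in a definite 4-manifold with boundary $S^3$, then $V_0(L)=0$ or $V_0(L^*)=0$. One gets this by excising a tubular neighborhood of the disk, attaching a 2-handle along $L$ with framing of the appropriate sign to produce a definite filling of a large surgery on $L$ or on $L^*$, and combining the Ozsv\'{a}th--Szab\'{o} $d$-invariant inequality for definite fillings with the large-surgery formula for the $V_k$-invariants. Granting this, it suffices to prove
\[
V_0\big(K\#(J_{n,1})^*\big)>0 \qquad\text{and}\qquad V_0\big(K^*\#J_{n,1}\big)>0 .
\]

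The second inequality I would obtain from the $\tau$-invariant. Wu's cabling formula (equivalently Hom's $\tau$-cabling formula) gives $\tau(J_{n,1})=n\tau(J)$, so by additivity of $\tau$ under connected sum and the hypothesis $\tau(K)<n\cdot\tau(J)$,
\[
\tau\big(K^*\#J_{n,1}\big)=n\tau(J)-\tau(K)>0 .
\]
Since $V_0(L')=0$ implies $\tau(L')\le 0$ (indeed $\nu^+\ge\tau$ and $V_0(L')=0$ is equivalent to $\nu^+(L')=0$), this forces $V_0\big(K^*\#J_{n,1}\big)>0$.

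The first inequality is the heart of the matter, because $\tau\big(K\#(J_{n,1})^*\big)=\tau(K)-n\tau(J)<0$, so the shortcut just used is unavailable. Here I would apply Wu's cabling formula to compute the entire sequence $\{V_k(J_{n,1})\}_{k\ge 0}$, hence (by dualizing) $\{V_k((J_{n,1})^*)\}_{k\ge 0}$, in terms of $\{V_k(J)\}$ and $\tau(J)$; the salient point is that $(n,1)$-cabling multiplies $\tau$ by $n$ but does not change $V_0$, so $V_0(J_{n,1})=V_0(J)$. Then Bodn\'{a}r--Celoria--Golla's connected-sum formula, used in the form that $V_0$ is subadditive under connected sum, together with concordance invariance of $V_0$, yields
\[
V_0(K)=V_0\big(K\#(J_{n,1})^*\#J_{n,1}\big)\le V_0\big(K\#(J_{n,1})^*\big)+V_0(J_{n,1})=V_0\big(K\#(J_{n,1})^*\big)+V_0(J),
\]
so $V_0\big(K\#(J_{n,1})^*\big)\ge V_0(K)-V_0(J)>0$, as required.

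I expect the main obstacle to be making the third paragraph honest: reading off from Wu's cabling formula the precise $V_\bullet$-data of $J_{n,1}$ and of the mirrored cable $(J_{n,1})^*$ (whose companion is $J^*$), with the right grading conventions, and verifying that $(n,1)$-cabling really does preserve $V_0$. Should that fail and $(n,1)$-cabling instead raise $V_0$ above $V_0(K)$, the crude subadditive estimate would be too weak, and one would have to apply the full Bodn\'{a}r--Celoria--Golla formula to the explicit $V_\bullet/H_\bullet$-sequences of $K$ and $(J_{n,1})^*$ and check that the lower bound remains at least $V_0(K)-V_0(J)$; it is that combinatorial verification, together with the bookkeeping of mirrors and reversals, that is delicate.
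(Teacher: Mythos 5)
Your proposal is correct and follows essentially the same route as the paper: the definite-filling obstruction coming from the Ozsv\'{a}th--Szab\'{o} $d$-invariant inequality reduces the claim to $V_0(K\#(J_{n,1})^*)>0$ (obtained exactly as you do, from Wu's $q=1$ cabling formula $V_0(J_{n,1})=V_0(J)$ together with Bodn\'{a}r--Celoria--Golla subadditivity) plus a $\tau$-computation handling the positive definite case via Hedden's cabling theorem. The only cosmetic difference is that you assert the equality $\tau(J_{n,1})=n\tau(J)$, whereas the argument (and Hedden's theorem, unconditionally) only requires the inequality $\tau(J_{n,1})\ge n\tau(J)$; your worry in the final paragraph is unfounded, since Wu's formula does give $V_0(J_{n,1})=V_0(J)$ directly.
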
 

Note that if both $K$ and $J$ are topologically slice, then $K\#(J_{n,1})^*$ is also topologically slice for any $n \in \mathbb{Z}\setminus \{0\}$. Furthermore, it follows from 
\cite[Proposition 6.1]{hedden-kim-livingston} and 
\cite[Theorem B.1]{hedden-kim-livingston} 
that for any $m \in \mathbb{Z}_{>0}$, there exists a topologically slice knot $K_m$ with $V_0(K_m)=m$. 
Hence by taking $K_l \# ((K_m)_{n,1})^*$ so that $l>m$ and $n$ is sufficiently large,
we immediately obtain infinitely many topologically slice knots which are not slice in any definite 4-manifold. Our proof of the linear independence of these topologically slice knots relies on Kim-Park's recent result \cite{kim-park}.

The problem of smooth sliceness leads to the notion of the {\it kinkiness} of knots, as defined by Gompf \cite{gompf}.
Let $K$ be a knot in $S^3= \partial B^4$, and
consider all self-transverse immersed disks in $B^4$ with boundary $K$.
Then we define
$k_+(K)$ (resp.\ $k_-(K)$) to be the minimal number of positive (resp.\ negative) 
self-intersection points occurring in such a disk.
Gompf proved in \cite{gompf} that for any $n \in \mathbb{Z}_{>0}$,
there exists a topologically slice knot $K$ such that $(k_+(K),k_-(K))=(0,n)$.
On the other hand, as far as the author knows,
whether there exist topologically slice knots which satisfy $k_+>0$ and $k_->0$ 
remained so far unsolved. 
In this paper, we give an affirmative answer to the question.

\begin{thm}
\label{thm2}
For any $m,n \in \mathbb{Z}_{>0}$,  there exist infinitely many topologically slice knots with $k_+ \geq m$ and $k_- \geq n$.
\end{thm}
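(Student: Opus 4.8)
The plan is to derive Theorem~\ref{thm2} from Proposition~\ref{prop2} (and the topologically slice knots produced right after it) together with two lower bounds for the kinkiness invariants,
\[
k_+(K)\ \ge\ V_0(K)\qquad\text{and}\qquad k_-(K)\ \ge\ -\tau(K),
\]
valid for every knot $K$. (Replacing $K$ by $K^*$ also gives $k_+(K)\ge\tau(K)$ and $k_-(K)\ge V_0(K^*)$, but only the two displayed inequalities will be needed.) I would prove these by the standard crossing-change argument: slicing $B^4$ by concentric $3$-spheres turns a self-transverse immersed disk $D\subset B^4$ with $\partial D=K$ into a movie exhibiting $K$ as the result of finitely many crossing changes performed on the unknot, one for each double point of $D$, with the sign of the double point dictating which of the two types of crossing change occurs. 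Thus an immersed disk with $p_+$ positive and $p_-$ negative double points writes $K$ as obtained from the unknot by $p_+$ changes of one type and $p_-$ of the other. Since $\tau$ changes by at most $1$ under a crossing change and only monotonically in the relevant direction (Ozsv\'{a}th--Szab\'{o}), and the same holds for $V_0$ via the behaviour of the $d$-invariants of surgeries under crossing changes (cf.\ \cite{ni-wu}), running along the movie from $\tau=V_0=0$ at the unknot yields $\tau(K)\le p_+$ and $V_0(K)\le p_+$; minimizing over all such $D$ gives the displayed bounds.

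Granting this, fix $m,n\in\mathbb{Z}_{>0}$ and argue as in the discussion following Proposition~\ref{prop2}. By \cite{hedden-kim-livingston} there is, for each $j\in\mathbb{Z}_{>0}$, a topologically slice knot $K_j$ with $V_0(K_j)=j$, and these may be taken with $\tau(K_j)>0$. Put $J=K_1$ and let $K=K_l$ with $l$ to be chosen large. The $(N,1)$-cabling formula gives $\tau(J_{N,1})=N\tau(J)$, so
\[
\tau\!\left(K_l\#(J_{N,1})^*\right)=\tau(K_l)-N\tau(J),
\]
which is $\le -n$ once $N$ is large. On the other hand, the very computation with Wu's cabling formula and the Bodn\'{a}r--Celoria--Golla connected-sum formula for the $V_k$ that underlies Proposition~\ref{prop2} shows $V_0\!\left(K_l\#(J_{N,1})^*\right)$ to be positive, and for $l$ large to be at least $m$ (for $N$ large the mirrored cable contributes nothing to $V_0$, so the value is $V_0(K_l)=l$). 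Choosing $l$ and then $N$ accordingly, the knots $L_N:=K_l\#(J_{N,1})^*$ satisfy $k_+(L_N)\ge V_0(L_N)\ge m$ and $k_-(L_N)\ge -\tau(L_N)\ge n$. Each $L_N$ is topologically slice --- a connected sum of a topologically slice knot with the mirror of an $(N,1)$-cable of a topologically slice knot --- and the $L_N$ have pairwise distinct $\tau$, hence are pairwise non-concordant; this yields infinitely many knots as required, proving Theorem~\ref{thm2}.

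The one genuinely substantive point is the kinkiness bound of the first step, and within it the crossing-change inequality for $V_0$ in the monotone form used above together with the precise matching between the sign of a double point and the direction of its crossing change; the $\tau$ half is classical, and everything afterwards is the already-available $V_k$ computation behind Proposition~\ref{prop2} plus the elementary choice of $l$ and $N$. It is also worth checking explicitly that the topologically slice knots $K_j$ of \cite{hedden-kim-livingston} can be taken with $\tau(K_j)>0$, as this is used both here and in the corollary to Proposition~\ref{prop2}.
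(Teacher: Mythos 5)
Your overall strategy coincides with the paper's: bound $k_+$ from below by $V_0$ (the paper goes through $\nu^+\geq V_0$) and $k_-$ from below by $-\tau$, then feed in topologically slice knots of the form $K\#(J_{N,1})^*$ whose $V_0$ is large and whose $\tau$ is very negative, distinguishing infinitely many of them by $\tau$. The paper's explicit examples are $(\#_{i=1}^{2k+1}\Wh(T_{2,3}))\#((\Wh(T_{2,3}))_{l+2k+1,1})^*$, but your choice of Hedden--Kim--Livingston knots serves the same purpose (and such knots can indeed be taken with $\tau>0$; e.g.\ $\#^{k}\Wh(T_{2,3})$ itself has $\tau=k$ and $V_0=\lceil k/2\rceil$). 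The one slip in that part is that Lemma \ref{lem4} only gives $V_0(K_l\#(J_{N,1})^*)\geq V_0(K_l)-V_0(J)=l-1$, not $l$ --- the mirrored cable does cost you $V_0(J)$ --- but taking $l\geq m+1$ absorbs this.

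The genuine gap is in your proof of the kinkiness bounds, which is the heart of the matter (the paper's Lemma \ref{lem7}). It is not true that a self-transverse immersed disk in $B^4$ with $p_+$ positive and $p_-$ negative double points exhibits $K$ as the result of $p_++p_-$ crossing changes performed on the unknot: the radial movie of such a disk contains births, deaths and saddle moves in addition to the crossing changes coming from the double points, and these cannot in general be traded away. Concretely, your claim would force $u(K)\leq p_++p_-$, so any slice knot with positive unknotting number (e.g.\ $6_1$, which bounds an embedded disk with no double points) is already a counterexample. The correct normal form is \cite[Proposition 2.1]{owens-strle}, which is what the paper invokes: some knot $J$ \emph{concordant} to $K$ is deformed into a \emph{slice knot} (not the unknot) by $k_+(K)$ positive crossing changes and finitely many negative ones. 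Since $\tau$, $V_0$ and $\nu^+$ are concordance invariants vanishing on slice knots, monotonicity under crossing changes then yields exactly the bounds you want; without this replacement your first step does not stand. You should also either cite or prove the crossing-change inequality for $V_0$ in the monotone form you use, or simply pass through $\nu^+$ as the paper does and recover $\nu^+\geq V_0$ from $V_{i+1}\geq V_i-1$.
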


\begin{acknowledge}
The author was supported by JSPS KAKENHI Grant Number 15J10597.
The author would like to thank his supervisor, Tam\'{a}s K\'{a}lm\'{a}n
for his encouragement and useful comments.
The author also would like to thank 
Wenzhao Chen, Marco Golla, Matthew Hedden and Jennifer Hom
for their stimulating discussions.
\end{acknowledge}

\section{Preliminaries}

In this section, 
we recall some knot concordance invariants derived from Heegaard Floer homology theory, and show that they give obstructions to sliceness of knots in definite 4-manifolds. 

\subsection{Correction terms and $d_1$-invariant}

Ozsv\'{a}th and Szab\'{o} \cite{ozsvath-szabo} introduced a $\mathbb{Q}$-valued invariant
$d$ (called the {\it correction term})
for rational homology 3-spheres endowed with a Spin$^c$ structure.
In particular, since any integer homology 3-sphere $Y$ has a unique Spin$^c$ structure,
we may denote the correction term simply by $d(Y)$ in this case.
Furthermore, 
we note  that for any integer homology 3-sphere $Y$, $d(Y)$ is an even integer. 

Let $S^3_1(K)$ denote the $1$-surgery along  a knot $K$ in $S^3$.
Then $S^3_1(K)$ is an integer homology 3-sphere,
and hence we can define the {\it $d_1$-invariant} of $K$ as $d_1(K):=d(S^3_1(K))$.
It is known that $d_1(K)$ is a knot concordance invariant of $K$.
For details, see \cite{peters}. 
Here we show that the $d_1$-invariant gives an obstruction to sliceness in negative definite 4-manifolds.
\begin{lem}
\label{lem1}
If a knot $K$ is smoothly slice in some negative definite 4-manifold,
then we have $d_1(K) = 0$.
\end{lem}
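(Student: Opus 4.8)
The plan is to present $S^3_1(K)$ as the boundary of a negative definite $4$-manifold whose intersection form is the standard one, and then to trap $d_1(K) = d(S^3_1(K))$ between two inequalities coming from Ozsv\'{a}th--Szab\'{o}'s bound on $d$-invariants of definite fillings.

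Let $V$ be negative definite with $\partial V = S^3$ and let $D \subset V$ be a smooth disk with $\partial D = K$ and $[D,\partial D] = 0 \in H_2(V,\partial V;\mathbb Z)$. Capping off with a $4$-ball shows that $Q_V$ is unimodular, and Donaldson's diagonalization theorem, applied to the closed manifold $V \cup_{S^3} B^4$, identifies $Q_V$ with $-\langle 1\rangle^{b_2(V)}$. The first substantive step is to check that, since $[D,\partial D] = 0$, the normal framing $D$ induces on $K$ is the Seifert framing; I would verify this by capping $D$ off inside the $B^4$-summand of $V \cup B^4$ with a pushed-in Seifert surface of $K$ and noting that the resulting closed surface is null-homologous in $V \cup B^4$, so its self-intersection number --- which is exactly the framing difference --- is zero. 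Granting this, attach a $2$-handle to $V$ along $K$ with framing $+1$; the resulting $X$ has $\partial X = S^3_1(K)$, and $\widehat D := D \cup (\text{core})$ is an embedded $2$-sphere with $[\widehat D]^2 = +1$ that is orthogonal to the image of $H_2(V)$ in $H_2(X)$, so $Q_X \cong Q_V \oplus \langle 1\rangle$. Blowing $\widehat D$ down yields $Y$ with $\partial Y = S^3_1(K)$ and $Q_Y \cong Q_V \cong -\langle 1\rangle^{b_2(V)}$.

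Now I would run the $d$-invariant estimate. Since $S^3_1(K)$ is an integral homology sphere it carries a unique $\mathrm{Spin}^c$ structure, which restricts from every $\mathfrak t \in \mathrm{Spin}^c(Y)$. As $Q_Y$ is the standard negative definite form, it has a characteristic covector of square $-b_2(Y)$; realizing this covector as $c_1(\mathfrak t)$ for a suitable $\mathfrak t$ (possible since every characteristic covector arises as some $c_1$) and plugging into Ozsv\'{a}th--Szab\'{o}'s inequality $c_1(\mathfrak t)^2 + b_2(Y) \le 4\,d(S^3_1(K))$ --- valid for any negative definite filling of a homology sphere --- gives $d_1(K) \ge 0$. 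For the reverse inequality, note that $S^3_1(K)$ always bounds the positive definite manifold obtained from $B^4$ by attaching a $(+1)$-framed $2$-handle along $K$; hence $-S^3_1(K)$ bounds a negative definite manifold with $b_2 = 1$ and form $\langle -1\rangle$, and the same inequality there gives $d(-S^3_1(K)) \ge \tfrac{-1+1}{4} = 0$, i.e.\ $d_1(K) \le 0$. (One could instead quote $d_1(K) = -2V_0(K) \le 0$.) Combining, $d_1(K) = 0$.

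The step I expect to be the main obstacle is the framing claim --- that a null-homologous properly embedded disk induces the Seifert framing on its boundary. It is what makes the handle construction output precisely $S^3_1(K)$ rather than some other surgery, and it is the only place where the hypothesis $[D,\partial D] = 0$ is used in an essential way (the weaker statement $[D] = 0$ modulo torsion would not suffice). The other ingredients --- handle attachment, blow-down, Donaldson's theorem, and the Ozsv\'{a}th--Szab\'{o} inequality --- are standard; a minor technical point is to confirm that torsion in $H_1(V)$ disturbs neither the orthogonal splitting $Q_X \cong Q_V \oplus \langle 1\rangle$ nor the realization of the chosen characteristic covector by a genuine $\mathrm{Spin}^c$ structure, but this is routine since torsion elements do not affect the characteristic condition.
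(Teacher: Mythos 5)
Your proposal is correct and follows essentially the same route as the paper: attach a $(+1)$-framed $2$-handle along $K$, cap the slice disk with the core to obtain an embedded sphere of square $+1$, split off a $\mathbb{C}P^2$ summand, and apply the Ozsv\'{a}th--Szab\'{o} inequality to the resulting negative definite filling of $S^3_1(K)$. The only differences are in the endgame and are essentially cosmetic: the paper invokes \cite[Corollary 9.8]{ozsvath-szabo} (which rests on Elkies' theorem) where you use Donaldson's diagonalization of $Q_V$ together with an explicit characteristic covector on the orthogonal complement of the sphere's class, and it quotes Peters' result $d_1(K)\leq 0$ where you rederive it from the positive definite filling given by a $(+1)$-framed handle on $B^4$; your explicit verification that a null-homologous disk induces the Seifert framing is a point the paper leaves implicit.
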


\begin{proof}
It is proved in \cite{peters} that $d_1(K) \leq 0$ for any knot $K$.
Hence we only need to show that $d_1(K) \geq 0$. 

Suppose that $K$ is slice in a negative definite 4-manifold $V$. Then there exists a
properly embedded null-homologous disk $D$ in $V$ with boundary $K$.
By attaching a $(+1)$-framed 2-handle $h^2$ along $K$, and gluing $D$ with
the core of $h^2$, we obtain an embedded 2-sphere $S$ in $W := V \cup h^2$
with self-intersection $+1$. This implies that there exists a 4-manifold $W'$ 
with boundary $S^3_1(K)$ such that $W = W' \# \mathbb{C}P^2$. 
Note that $\partial W' = \partial W = S^3_1(K)$.
Since the number of positive eigenvalues of the intersection form of $W$ is one,
the intersection form of $W'$ must be negative definite.
Now we use the following theorem.
\begin{thm}[Ozsv\'{a}th-Szab\'{o}, \text{\cite[Corollary 9.8]{ozsvath-szabo}}]
\label{thm d_1}
If $Y$ is an integer homology 3-sphere with $d(Y)<0$, then there is
no negative definite 4-manifold $X$ with $\partial X = Y$.
\end{thm}
By Theorem \ref{thm d_1} and the existence of $W'$, we have $d_1(K) = d(S^3_1(K)) \geq 0$.
\end{proof}

\subsection{$\tau$-invariant, $V_k$-invariant and $\nu^+$-invariant}
The {\it $\tau$-invariant $\tau$} is
a famous knot concordance invariant defined by Ozsv\'{a}th-Szab\'{o} 
\cite{ozsvath-szabo3} and Rasmussen \cite{rasmussen}. It is known that
$\tau$ is a group homomorphism from $\mathcal{C}$ to $\mathbb{Z}$, while $d_1$ is
not a homomorphism.

The {\it $V_k$-invariant } is a family of $\mathbb{Z}_{\geq 0}$-valued knot concordance invariants $\{V_k(K)\}_{k \geq 0}$ defined by Ni and Wu \cite{ni-wu}. 
In particular, $\nu^+(K) := \min \{ k \geq 0 \mid V_k(K) =0 \}$ is known as the {\it $\nu^+$-invariant} \cite{hom-wu}. 
It is proved in \cite{hom-wu} that for any knot $K$, the inequality $\tau(K) \leq \nu^+(K)$ holds.

In \cite{ni-wu}, Yi and Ni prove that the set $\{V_k(K)\}_{k \geq 0}$ determines all correction terms of $p/q$-surgeries along $K$ for any coprime $p,q>0$. 
Let $S^3_{p/q}(K)$ denote the $p/q$-surgery along $K$.
Note that there is a canonical identification between the set of Spin$^c$ structures over $S^3_{p/q}(K)$ and $\{ i \mid 0 \leq i \leq p-1 \}$.
(This identification can be made explicit by the procedure in \cite[Section 4, Section 7]{ozsvath-szabo4}.)

\begin{prop}[Ni-Wu, \text{\cite[Proposition 1.6]{ni-wu}}]
\label{prop ni-wu}
Suppose $p,q>0$, and fix $0 \leq i \leq p-1$. Then
$$
d(S^3_{p/q}(K),i) = d(S^3_{p/q}(O),i) - 2 
\max \left\{ V_{\lfloor \frac{i}{q} \rfloor}(K), V_{\lfloor \frac{p+q -1-i}{q} \rfloor}(K) \right\},
$$
where $O$ denotes the unknot and $\lfloor \cdot \rfloor$ is the floor function.
\end{prop}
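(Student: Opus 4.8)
The plan is to read off the correction terms from the Ozsv\'{a}th--Szab\'{o} rational surgery formula. Recall from \cite{ozsvath-szabo4} that $HF^{+}\bigl(S^{3}_{p/q}(K)\bigr)$ is the homology of a mapping cone $\mathbb{X}^{+}(p/q)$ assembled from the ``large surgery'' complexes $A^{+}_{s}$ (sub-quotients of $CFK^{\infty}(K)$, $s\in\mathbb{Z}$) together with copies $B^{+}\simeq CF^{+}(S^{3})$, joined by chain maps $v^{+}_{s}\colon A^{+}_{s}\to B^{+}$ and $h^{+}_{s}\colon A^{+}_{s}\to B^{+}$ landing in different $B$-vertices; under the identification of \cite[Section 4, Section 7]{ozsvath-szabo4}, the $\mathrm{Spin}^{c}$ structure labelled $i$ cuts out a direct summand $\mathbb{X}^{+}(p/q,i)$ whose $A$-vertices are the $A^{+}_{\lfloor(i+pk)/q\rfloor}$, $k\in\mathbb{Z}$. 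The only input about $K$ that is needed is standard: on homology $H_{*}(A^{+}_{s})\cong\mathcal{T}^{+}\oplus(\text{finite torsion})$ with $\mathcal{T}^{+}=\mathbb{F}[U,U^{-1}]/U\mathbb{F}[U]$, $\mathbb{F}=\mathbb{Z}/2\mathbb{Z}$, and, restricted to the tower $\mathcal{T}^{+}$, the maps induced by $v^{+}_{s}$ and $h^{+}_{s}$ are multiplication by $U^{V_{s}(K)}$ and $U^{H_{s}(K)}$, with $V_{s}(K),H_{s}(K)\ge 0$; here $V_{s}$ is non-increasing in $s$ and vanishes for $s\gg 0$, $H_{s}$ is non-decreasing in $s$ and vanishes for $s\ll 0$, and the symmetry of $CFK^{\infty}$ gives $H_{s}(K)=V_{-s}(K)$.

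First I would truncate: since $V_{s}$ (resp.\ $H_{s}$) is eventually $0$, all but finitely many vertex maps are isomorphisms on the towers, so $\mathbb{X}^{+}(p/q,i)$ is quasi-isomorphic to a finite sub-cone $\mathbb{X}^{+}(p/q,i)_{[-b,b]}$ for $b\gg 0$. Next, $H_{*}\bigl(\mathbb{X}^{+}(p/q,i)\bigr)=HF^{+}\bigl(S^{3}_{p/q}(K),i\bigr)$ contains a single tower $\mathcal{T}^{+}$, and by definition \cite{ozsvath-szabo} $d\bigl(S^{3}_{p/q}(K),i\bigr)$ is the grading of its bottom element; the torsion summands of the $H_{*}(A^{+}_{s})$ play no role. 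Then I would compare with the unknot: the grading conventions of \cite[Section 4, Section 7]{ozsvath-szabo4} make the underlying graded towers of $\mathbb{X}^{+}(p/q,i)_{[-b,b]}$ identical for $K$ and for $O$ (they depend only on $p,q,i,b$), while the $v$- and $h$-edges of the $K$-cone differ from those of the $O$-cone only by extra non-negative powers of $U$ recorded by the $V_{s}(K)$ and $H_{s}(K)$. Using that $V_{s}$ is non-increasing and $H_{s}$ non-decreasing (so that the interior vertices are never the bottleneck), a diagram chase on the truncated cone shows that the bottom of the surviving tower is lowered, relative to the unknot, only through the two innermost $A$-vertices $A^{+}_{\lfloor i/q\rfloor}$ (via its $v$-edge) and $A^{+}_{\lfloor(i-p)/q\rfloor}$ (via its $h$-edge), and exactly by twice the larger of the associated $U$-exponents. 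Since $\lfloor i/q\rfloor\ge 0$ and $-\lfloor(i-p)/q\rfloor=\lceil(p-i)/q\rceil\ge 1$, the unknot's own values $V_{\lfloor i/q\rfloor}(O)$ and $H_{\lfloor(i-p)/q\rfloor}(O)$ both vanish, so this gives
$$
d\bigl(S^{3}_{p/q}(K),i\bigr)=d\bigl(S^{3}_{p/q}(O),i\bigr)-2\max\bigl\{V_{\lfloor i/q\rfloor}(K),\,H_{\lfloor(i-p)/q\rfloor}(K)\bigr\}.
$$
Finally, using $H_{s}=V_{-s}$ and the identity $\lceil m/q\rceil=\lfloor(m+q-1)/q\rfloor$ (with $\lceil\cdot\rceil$ the ceiling function), rewrite $H_{\lfloor(i-p)/q\rfloor}(K)=V_{\lceil(p-i)/q\rceil}(K)=V_{\lfloor(p+q-1-i)/q\rfloor}(K)$, which is the stated formula.

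The step I expect to be the main obstacle is the comparison with the unknot: one must show that among all the $A$-vertices of $\mathbb{X}^{+}(p/q,i)$ it is exactly the two innermost ones that govern the bottom of the surviving tower, and that their contribution is simply additive with the lens-space value $d\bigl(S^{3}_{p/q}(O),i\bigr)$ rather than interacting with the rest of the truncated differential. This is where the monotonicity of $V_{s}$ and $H_{s}$ and a careful accounting of the mapping-cone differential together with its grading shifts are genuinely needed; it is the technical core of the proof of \cite[Proposition 1.6]{ni-wu}.
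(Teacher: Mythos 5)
This proposition is quoted in the paper from Ni--Wu without proof, so there is no in-paper argument to compare against; the relevant comparison is with the proof in \cite{ni-wu} itself, which proceeds exactly as you describe: via the Ozsv\'ath--Szab\'o rational surgery mapping cone, truncation, and the identification of $v^+_s$, $h^+_s$ with $U^{V_s}$, $U^{H_s}$ on the towers. Your outline is correct, including the index bookkeeping (the two innermost $A$-vertices are $A^+_{\lfloor i/q\rfloor}$ and $A^+_{\lfloor (i-p)/q\rfloor}$, the unknot's values vanish there because $\lfloor i/q\rfloor\ge 0$ and $\lfloor (i-p)/q\rfloor\le -1$, and the final rewriting $H_{\lfloor (i-p)/q\rfloor}=V_{\lceil (p-i)/q\rceil}=V_{\lfloor (p+q-1-i)/q\rfloor}$ is right). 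The one substantive step you defer --- the diagram chase showing that, thanks to the monotonicity of $V_s$ and $H_s$, only the two innermost vertices can be the bottleneck for the bottom of the surviving tower, and that their effect is additive with the lens-space value --- is indeed the technical core of Ni--Wu's argument, so as a proof sketch this is faithful to the actual proof rather than a different route.
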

As a corollary, the following lemma holds.
(Note that $\{ V_k(K) \}_{k \geq 0}$ satisfy the inequalities $V_k(K) -1 \leq V_{k+1}(K) \leq V_{k}(K)$ for each $k \geq 0$.)
\begin{lem}
\label{lem2}
For any knot $K$, we have $d_1 (K) = -2 V_0(K)$.
\end{lem}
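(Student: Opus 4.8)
The plan is to simply specialize Proposition \ref{prop ni-wu} to the case $p=q=1$. For $p=q=1$ the surgered manifold $S^3_{p/q}(K)$ is $S^3_1(K)$, there is a unique Spin$^c$ structure so the index $i$ must be $0$, and the floor functions evaluate to $\lfloor 0/1 \rfloor = 0$ and $\lfloor (1+1-1-0)/1 \rfloor = 0$. Thus Proposition \ref{prop ni-wu} gives
$$
d_1(K) = d(S^3_1(K),0) = d(S^3_1(O),0) - 2\max\{V_0(K),V_0(K)\} = d(S^3_1(O)) - 2V_0(K).
$$

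It then remains to compute the unknot term $d(S^3_1(O))$. Since $1$-surgery on the unknot yields $S^3$ itself, and $d(S^3)=0$ (indeed $S^3$ bounds $B^4$, whose intersection form is trivial), we conclude $d_1(K) = -2V_0(K)$, which is exactly the claimed identity.

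I do not anticipate any real obstacle here: the statement is a direct corollary of the already-quoted formula of Ni--Wu, and the only input beyond it is the elementary facts $S^3_1(O)=S^3$ and $d(S^3)=0$. The monotonicity inequalities $V_k(K)-1 \le V_{k+1}(K) \le V_k(K)$ recalled just before the lemma are not needed for this particular computation (they will matter elsewhere); one could, if desired, also remark that this gives a second proof that $d_1(K)\le 0$, consistent with Lemma \ref{lem1}.
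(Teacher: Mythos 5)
Your overall strategy --- specializing Proposition \ref{prop ni-wu} to $p=q=1$ and using $S^3_1(O)=S^3$, $d(S^3)=0$ --- is exactly the intended route, but there is an arithmetic slip that invalidates a step and leads you to an explicitly false side claim. With $p=q=1$ and $i=0$, the second floor function is
$$
\Big\lfloor \frac{p+q-1-i}{q} \Big\rfloor = \lfloor 1+1-1-0 \rfloor = 1,
$$
not $0$. So Ni--Wu gives
$$
d(S^3_1(K)) = d(S^3) - 2\max\{V_0(K), V_1(K)\},
$$
and to conclude that the maximum is $V_0(K)$ you must invoke the monotonicity $V_1(K) \leq V_0(K)$. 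This is precisely why the paper records the inequalities $V_k(K)-1 \leq V_{k+1}(K) \leq V_k(K)$ in the sentence immediately preceding the lemma; your closing remark that ``the monotonicity inequalities \dots are not needed for this particular computation'' is therefore wrong. The fix is one line, and the final identity $d_1(K)=-2V_0(K)$ is unaffected, but as written the proof asserts $\lfloor (p+q-1-i)/q\rfloor = 0$, which is false, and discards the one auxiliary fact the argument actually requires.
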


Here we show that the $\tau$-invariant also gives an obstruction to sliceness in negative definite 4-manifolds. 

\begin{lem}
\label{lem3}
If a knot $K$ is slice in some negative definite 4-manifold,
then we have $\tau(K) \leq 0$.
\end{lem}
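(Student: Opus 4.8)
The plan is to mimic the strategy of Lemma \ref{lem1}, but now using the $\tau$-invariant in place of the $d_1$-invariant. Suppose $K$ is slice in a negative definite 4-manifold $V$, so there is a properly embedded null-homologous disk $D$ in $V$ with $\partial D = K$. As in the proof of Lemma \ref{lem1}, attach a $(+1)$-framed 2-handle $h^2$ along $K$ and cap $D$ off with the core of $h^2$; the resulting 4-manifold $W = V \cup h^2$ contains an embedded sphere $S$ of self-intersection $+1$, so $W = W' \# \mathbb{C}P^2$ for some negative definite 4-manifold $W'$ with $\partial W' = S^3_1(K)$.

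Next I would bring the $\tau$-invariant into play via its known relationship with surgery and slice genus bounds. The key input is the Ozsv\'{a}th--Szab\'{o} inequality relating $\tau(K)$ to the existence of certain surfaces/cobordisms: specifically, if $K$ is slice in a negative definite 4-manifold, then running $K$ through a $(+1)$-framed 2-handle produces a homology sphere $S^3_1(K)$ bounding a negative definite 4-manifold, and the adjunction-type bounds for $\tau$ (or equivalently the behaviour of $\nu^+$ under such operations) force $\tau(K) \le 0$. More concretely, one can invoke that $\tau(K) \le V_0(K)$ combined with... no — the cleanest route is: the hypothesis gives $d_1(K) = 0$ by Lemma \ref{lem1}, hence $V_0(K) = 0$ by Lemma \ref{lem2}, hence $\nu^+(K) = 0$, and therefore $\tau(K) \le \nu^+(K) = 0$ by the Hom--Wu inequality cited in the excerpt. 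This is short and uses only results already stated.

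The main obstacle — or rather the main point to get right — is making sure the chain of implications is actually valid: Lemma \ref{lem1} gives $d_1(K) = 0$, Lemma \ref{lem2} then gives $V_0(K) = 0$, the definition $\nu^+(K) = \min\{k \ge 0 \mid V_k(K) = 0\}$ forces $\nu^+(K) = 0$, and finally the inequality $\tau(K) \le \nu^+(K)$ from \cite{hom-wu} yields $\tau(K) \le 0$. Each step is a black box already established above, so there is essentially no computation; the only subtlety is that this argument, as written, gives $\tau(K) \le 0$ rather than $\tau(K) = 0$, which is exactly what the lemma claims. Hence the proof reduces to stringing together Lemmas \ref{lem1} and \ref{lem2} with the Hom--Wu inequality, and I would present it in precisely that order.
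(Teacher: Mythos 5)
Your proposal is correct and, once you discard the unnecessary handle-attachment preamble, it is exactly the paper's proof: Lemma \ref{lem1} gives $d_1(K)=0$, Lemma \ref{lem2} gives $V_0(K)=0$ hence $\nu^+(K)=0$, and the Hom--Wu inequality $\tau(K)\leq\nu^+(K)$ finishes. Note also that the lemma only claims $\tau(K)\leq 0$, so there is no gap in concluding an inequality rather than an equality.
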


\begin{proof}
Suppose that $K$ is slice in some negative definite 4-manifold. Then by Lemma \ref{lem1},
we have $d_1(K)=0$. By Lemma \ref{lem2}, this implies that $V_0(K)=0$ and $\nu^+(K)=0$.
Hence we have $\tau(K) \leq \nu^+(K) = 0$.
\end{proof}

By combining Lemma \ref{lem2} and Lemma \ref{lem3}, we obtain the following obstruction to sliceness in definite 4-manifolds.

\begin{prop}
\label{prop4}
Let $K$ be a knot. If $d_1(K) \neq 0$ and $\tau(K) < 0$, then $K$ is not slice in any definite 4-manifold.
\end{prop}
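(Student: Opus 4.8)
The plan is to reduce the positive-definite case to the negative-definite case by reversing orientation and passing to the mirror, and then to play off Lemma \ref{lem1} against Lemma \ref{lem3}. First I would record the easy observation that, by Lemma \ref{lem2}, the hypothesis $d_1(K)\neq 0$ is equivalent to $V_0(K)>0$; this will be the quantity obstructed in the negative-definite case. Then I would argue by contradiction: suppose $K$ is smoothly slice in a definite $4$-manifold $V$ with $\partial V = S^3$, via a null-homologous disk $D\subset V$ with $[D,\partial D]=0\in H_2(V,\partial V;\mathbb{Z})$.

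If $V$ is negative definite, then Lemma \ref{lem1} gives $d_1(K)=0$, directly contradicting the hypothesis $d_1(K)\neq 0$. If instead $V$ is positive definite, I would pass to $-V$, the same manifold with reversed orientation. Its intersection form is negated, so $-V$ is negative definite, and $\partial(-V)=-S^3\cong S^3$; under an orientation-reversing identification of the boundary the disk $D$ (with reversed orientation, still null-homologous) becomes a slice disk for the mirror $K^{*}$. Hence $K^{*}$ is smoothly slice in the negative definite manifold $-V$, so Lemma \ref{lem3} yields $\tau(K^{*})\le 0$. Since $\tau$ is a concordance homomorphism with $\tau(K^{*})=-\tau(K)$, this gives $\tau(K)\ge 0$, contradicting $\tau(K)<0$. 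Either way we reach a contradiction, so $K$ is not slice in any definite $4$-manifold.

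The only point requiring care — and the "main obstacle," such as it is — is the orientation bookkeeping in the positive-definite case: one must check that reversing the orientation of $V$ turns a null-homologous slice disk for $K$ into a null-homologous slice disk for $K^{*}$ in a negative definite manifold, and then invoke $\tau(K^{*})=-\tau(K)$. Everything else is an immediate application of the lemmas already proved.
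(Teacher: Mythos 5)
Your proof is correct and follows essentially the same route as the paper: Lemma \ref{lem1} disposes of the negative definite case directly via $d_1(K)\neq 0$, and the positive definite case is handled by reversing orientation, passing to $K^{*}$ slice in the negative definite $-V$, and applying Lemma \ref{lem3} together with $\tau(K^{*})=-\tau(K)$. The orientation bookkeeping you flag is exactly the step the paper also relies on, so there is nothing to add.
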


\begin{proof}
It immediately follows from Lemma \ref{lem1} that $K$ is not slice in any negative definite 4-manifold. Suppose that $K$ is slice in a positive definite 4-manifold $V$. Then by reversing the orientation of $V$, we obtain a slice disk in $-V$ with boundary $K^*$. 
Since $-V$ is negative definite and $\tau$ is a group homomorphism from $\mathcal{C}$ to $\mathbb{Z}$,  Lemma \ref{lem3} implies
$$
\tau(K) = - \tau(K^*) \geq 0.
$$
This contradicts the assumption $\tau(K) <0$.
\end{proof}

\subsection{Some formulas for $V_k$-invariants}

In this subsection, we recall Wu's cabling formula and Bodn\'{a}r-Celoria-Golla's connected sum formula for $V_k$-invariants.
Since the $(p,1)$-cable and the connected sum of topologically slice knots  are also topologically slice, we can estimate the $V_k$-invariants
of various topologically slice knots by using these formulas.  

We first recall Wu's cabling formula for $V_k$. 
For coprime integers $p, q >0$, let $T_{p,q}$ denote the $(p,q)$-torus knot and $K_{p,q}$ the $(p,q)$-cable of a knot $K$.
We define a map 
$$
\phi_{p,q} : \Big\{ i \  \Big| \  0\leq i \leq \frac{pq}{2} \Big\} \to 
\Big\{ i \  \Big| \  0 \leq i \leq q-1 \Big\}
$$
by
$$
\phi_{p,q}(i) \equiv i - \frac{(p-1)(q-1)}{2} \mod q.
$$

\begin{prop}[Wu, \text{\cite[Lemma 5.1]{wu}}]
\label{prop wu}
Given $p,q >0$ and $0 \leq i \leq \frac{pq}{2}$, we have
$$
V_i(K_{p,q}) = V_i(T_{p,q}) + \max 
\left\{ V_{\lfloor \frac{\phi_{p,q}(i)}{p} \rfloor}(K), 
V_{\lfloor \frac{p+q-1-\phi_{p,q}(i)}{p} \rfloor}(K) \right\}.
$$
\end{prop}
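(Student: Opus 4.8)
The plan is to derive Wu's formula from the rational surgery formula for $V_k$-invariants (Proposition \ref{prop ni-wu}) by exploiting the classical fact that surgery on a cable at the cabling slope is reducible. The key input is the diffeomorphism
$$S^3_{pq}(K_{p,q}) \cong S^3_{q/p}(K) \,\#\, L(p,q),$$
which comes from Gordon's analysis of Dehn surgery on satellite knots and, when $K = O$, is Moser's classical description of $S^3_{pq}(T_{p,q})$ as a connected sum of two lens spaces. Both sides are rational homology spheres whose first homology has order $pq$, and the idea is to compute their correction terms in two different ways and compare.

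On the one hand, Proposition \ref{prop ni-wu} applied to $S^3_{pq}(K_{p,q})$ (that is, to $S^3_{P/Q}$ with $P = pq$, $Q = 1$) gives, for $0 \le i \le pq/2$, where $V_i(K_{p,q}) \ge V_{pq-i}(K_{p,q})$,
$$d(S^3_{pq}(K_{p,q}), i) = d(S^3_{pq}(O), i) - 2 V_i(K_{p,q}).$$
On the other hand, since the correction term is additive under connected sum, the same number equals $d(S^3_{q/p}(K), \mathfrak{s}') + d(L(p,q), \mathfrak{s}'')$, where $(\mathfrak{s}', \mathfrak{s}'')$ is the pair of Spin$^c$ structures that the label $i$ picks out under the decomposition, and Proposition \ref{prop ni-wu} applied to $S^3_{q/p}(K)$ (that is, to $S^3_{P/Q}$ with $P = q$, $Q = p$) rewrites $d(S^3_{q/p}(K), \mathfrak{s}')$ as $d(S^3_{q/p}(O), \mathfrak{s}') - 2\max\{V_{\lfloor \mathfrak{s}'/p\rfloor}(K), V_{\lfloor (q+p-1-\mathfrak{s}')/p\rfloor}(K)\}$.

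Running this computation a second time with $K$ replaced by the unknot is what makes everything collapse: since $V_k(O) = 0$ for all $k$, the $K = O$ identity reads
$$d(S^3_{pq}(O), i) - 2 V_i(T_{p,q}) = d(S^3_{q/p}(O), \mathfrak{s}') + d(L(p,q), \mathfrak{s}''),$$
and subtracting it from the general identity cancels all the terms depending only on $p$, $q$ and the ambient lens spaces, leaving
$$V_i(K_{p,q}) = V_i(T_{p,q}) + \max\left\{ V_{\lfloor \mathfrak{s}'/p\rfloor}(K), \ V_{\lfloor (p+q-1-\mathfrak{s}')/p\rfloor}(K)\right\},$$
which is exactly the asserted formula provided $\mathfrak{s}' = \phi_{p,q}(i)$.

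The whole argument therefore reduces to the Spin$^c$-structure bookkeeping $\mathfrak{s}' = \phi_{p,q}(i)$, and this is the step I expect to be the main obstacle. One must follow, through the reducing diffeomorphism, how the Ozsv\'ath--Szab\'o labelling of Spin$^c$ structures on $S^3_{pq}(K_{p,q})$ is distributed across the connected sum onto the $L(p,q)$- and $S^3_{q/p}(K)$-factors; this is a purely topological computation involving the cabling annulus, the meridians, the surgery solid torus and the induced framings, with no Heegaard Floer input, and it is precisely what manufactures the shift by $(p-1)(q-1)/2$ and the reduction mod $q$ in the definition of $\phi_{p,q}$. Since the decomposition is local to the cabling region, the correspondence $i \mapsto \mathfrak{s}'$ is independent of $K$, which is what licenses the subtraction above, and since it is an affine bijection $\mathbb{Z}/pq \to \mathbb{Z}/q \times \mathbb{Z}/p$ it can be pinned down once and for all, for instance by tracking a meridian under the diffeomorphism or by treating the case $K = O$ explicitly together with one auxiliary example. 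I would add that the alternative route — computing $CFK^\infty(K_{p,q})$ directly from $CFK^\infty(K)$ — is in principle available but much heavier, so the surgery argument via Proposition \ref{prop ni-wu} is the one I would carry out.
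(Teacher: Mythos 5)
The paper does not prove this proposition; it is quoted directly from Wu's paper (the cited Lemma 5.1), with only a remark correcting a stray factor of $2$. Your proposal is, in outline, exactly Wu's own argument: apply Proposition \ref{prop ni-wu} to the integer surgery $S^3_{pq}(K_{p,q})$ (where, for $0\leq i\leq pq/2$, monotonicity of the $V_k$ reduces the maximum to $V_i(K_{p,q})$), use the reducibility $S^3_{pq}(K_{p,q})\cong S^3_{q/p}(K)\,\#\,L(p,q)$ together with additivity of $d$, apply Proposition \ref{prop ni-wu} again to $S^3_{q/p}(K)$, and cancel all $K$-independent terms against the same identity for the unknot. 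This derivation also makes transparent why no factor of $2$ appears in front of the maximum. The one piece you defer rather than execute is the identification $\mathfrak{s}'=\phi_{p,q}(i)$ of the induced Spin$^c$ structure on the $S^3_{q/p}(K)$ summand; that bookkeeping is the genuine computational content of Wu's lemma, and your justification that it is independent of $K$ (the decomposition is supported in the cabling region) is the right reason the subtraction step is legitimate. As a plan the argument is sound and complete modulo that computation.
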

(Here we corrected a small mistake in Wu's paper; there should be no factor of 2 in front of the maximum. His proof clearly establishes the formula above.)
If we consider the case where $q=1$, then we have $\phi_{p,1}(i) = 0$ for any  $0 \leq i \leq \frac{p}{2}$.
Hence Proposition \ref{prop wu} gives the following lemma.
\begin{lem}
\label{q=1}
Given $p>0$ and $0 \leq i \leq \frac{p}{2}$, we have
$$
V_i(K_{p,1}) = V_0(K).
$$
\end{lem}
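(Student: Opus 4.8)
The plan is to obtain this statement as an immediate specialization of Wu's cabling formula (Proposition \ref{prop wu}) to the case $q = 1$. Since the range $0 \le i \le \frac{p}{2}$ in the statement is precisely the range $0 \le i \le \frac{pq}{2}$ in Proposition \ref{prop wu} when $q = 1$, the formula applies without further hypotheses, and everything reduces to evaluating the terms on the right-hand side.

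First I would compute $\phi_{p,1}$. When $q = 1$ we have $\frac{(p-1)(q-1)}{2} = 0$, so the defining congruence reads $\phi_{p,1}(i) \equiv i \pmod 1$, which holds automatically; and since the codomain $\{\, i \mid 0 \le i \le q-1 \,\}$ equals $\{0\}$, we conclude $\phi_{p,1}(i) = 0$ for every $i$ in the domain. Substituting $\phi_{p,1}(i) = 0$ and $q = 1$ into the maximum appearing in Proposition \ref{prop wu}, the two subscripts become $\big\lfloor \tfrac{0}{p} \big\rfloor = 0$ and $\big\lfloor \tfrac{p + 1 - 1 - 0}{p} \big\rfloor = \big\lfloor \tfrac{p}{p} \big\rfloor = 1$, so that term equals $\max\{V_0(K), V_1(K)\}$. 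By the monotonicity inequality $V_1(K) \le V_0(K)$ recorded just before Lemma \ref{lem2}, this maximum is simply $V_0(K)$.

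It then remains to observe that $T_{p,1}$ is the unknot, whence $V_i(T_{p,1}) = 0$ for all $i \ge 0$; combining this with the computation above gives $V_i(K_{p,1}) = 0 + V_0(K) = V_0(K)$, as claimed. I do not anticipate any real obstacle here: the argument is pure bookkeeping once Proposition \ref{prop wu} is in hand, and the only points requiring a moment's care are the evaluation of the floor functions (to see that the second subscript is $1$ rather than $0$) and the appeal to $V_{k+1}(K) \le V_k(K)$ to collapse the maximum.
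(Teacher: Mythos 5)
Your proposal is correct and follows exactly the route the paper takes: specializing Proposition \ref{prop wu} to $q=1$, noting $\phi_{p,1}(i)=0$, and collapsing the resulting $\max\{V_0(K),V_1(K)\}$ via monotonicity while $V_i(T_{p,1})=0$ since $T_{p,1}$ is the unknot. The paper states this derivation only in passing, so your write-up simply supplies the same bookkeeping in full detail.
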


Next we recall Bodn\'{a}r-Celoria-Golla's connected sum formula for $V_k$.

\begin{prop}[Bodn\'{a}r-Celoria-Golla, \text{\cite[Proposition 6.1]{bodnar-celoria-golla}}]
\label{prop BCG}
For any two knots $K$ and $J$ and any $m, n \in \mathbb{Z}_{\geq 0}$, we have
$$
V_{m+n}(K \# J) \leq V_m(K) + V_n(J).
$$
\end{prop}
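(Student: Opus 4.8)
The plan is to deduce the inequality from the connected sum formula for the full knot Floer complex, together with the characterization of $V_k$ in terms of the large surgery truncations of $CFK^\infty$. Work over $\mathbb{F} = \mathbb{Z}/2\mathbb{Z}$, and recall that $CFK^\infty(K)$ is a complex over $\mathbb{F}[U,U^{-1}]$ equipped with the algebraic filtration $i$ and the Alexander filtration $j$, both of which the differential is non-increasing in. For each $k$ set $A_k^- = C\{\max(i, j-k) \le 0\}$ and $B^- = C\{i \le 0\}$. Since the condition $\max(i,j-k)\le 0$ is the same as $\{i \le 0,\ j \le k\}$, the complex $A_k^-$ is a subcomplex of $B^-$, and I write $v_k \colon A_k^- \hookrightarrow B^-$ for the inclusion. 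One has $H_*(B^-) \cong \mathbb{F}[U]$ and $H_*(A_k^-) \cong \mathbb{F}[U] \oplus (\text{$U$-torsion})$, and $V_k(K)$ is precisely the $U$-exponent by which $(v_k)_*$ acts on the free summand; equivalently, the image of $(v_k)_*$ is $U^{V_k(K)}\,\mathbb{F}[U]$. The key external input is the connected sum formula $CFK^\infty(K \# J) \simeq CFK^\infty(K) \otimes_{\mathbb{F}[U,U^{-1}]} CFK^\infty(J)$, under which the two filtrations add.

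The central observation is that these truncations behave subadditively under the tensor product. Because $i \le 0$ and $i' \le 0$ force $i + i' \le 0$, and $j \le m$ together with $j' \le n$ force $j + j' \le m+n$, the subcomplex $A_m^-(K) \otimes A_n^-(J)$ is carried, under the connected sum identification, into $A_{m+n}^-(K \# J)$; likewise $B^-(K) \otimes B^-(J)$ lands in $B^-(K \# J)$. Since all four maps are restrictions of the identity of $CFK^\infty(K \# J)$, I obtain a commuting square whose vertical maps are $v_m \otimes v_n$ and $v_{m+n}$ and whose horizontal maps are these two product inclusions. The bottom inclusion realizes the standard identification $CF^-(S^3)\otimes CF^-(S^3) \xrightarrow{\ \cong\ } CF^-(S^3)$, and so sends the generator $b_K \otimes b_J$ of $H_*(B^-(K))\otimes H_*(B^-(J)) \cong \mathbb{F}[U]$ to the generator $b$ of $H_*(B^-(K\#J))$.

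The inequality then falls out of a diagram chase on top generators. Let $a_K$ and $a_J$ generate the free summands of $H_*(A_m^-(K))$ and $H_*(A_n^-(J))$. Travelling down then across, $a_K \otimes a_J \mapsto U^{V_m(K)}b_K \otimes U^{V_n(J)}b_J \mapsto U^{V_m(K)+V_n(J)} b$, which is nonzero in $\mathbb{F}[U]$. Travelling across then down, the top product inclusion sends $a_K \otimes a_J$ to some class $\xi \in H_*(A_{m+n}^-(K\#J))$, and commutativity gives $(v_{m+n})_*(\xi) = U^{V_m(K)+V_n(J)}\,b$. Hence this nonzero element lies in the image of $(v_{m+n})_*$, which is $U^{V_{m+n}(K\#J)}\,\mathbb{F}[U]$; comparing $U$-exponents yields $V_{m+n}(K\#J) \le V_m(K) + V_n(J)$, as desired. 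I note that the chase never uses the grading of $\xi$, nor that $\xi$ is itself a generator — only that $(v_{m+n})_*(\xi)$ is nonzero — which makes the conclusion robust.

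The step I expect to require the most care is the compatibility of the connected sum identification with the two filtrations: I must verify that under $CFK^\infty(K\#J)\simeq CFK^\infty(K)\otimes CFK^\infty(J)$ the filtration levels genuinely add, so that the region inclusions above are honest chain maps. This is where the geometry of the $A_k^-$ truncations really enters. A secondary point is the Künneth bookkeeping over the PID $\mathbb{F}[U]$: one must confirm that $a_K \otimes a_J$ represents a class in $H_*(A_m^-(K)\otimes A_n^-(J))$ on which $v_m \otimes v_n$ acts as $(v_m)_* \otimes (v_n)_*$, so that the possible $\mathrm{Tor}$ summand does not interfere. Once these compatibilities are in place, the remaining verifications — that $(v_k)_*$ has image $U^{V_k}\mathbb{F}[U]$ and that the bottom inclusion is the standard isomorphism — are routine.
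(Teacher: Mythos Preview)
Your argument is correct and takes a genuinely different route from the paper. The paper does not reprove the full statement: it cites Bodn\'{a}r--Celoria--Golla for the case $m,n>0$ and supplies only the missing cases $m=0$ or $n=0$, doing so by an explicit cobordism argument --- building a negative definite $W$ from $S^3_2(K)\#S^3_{2n}(J)$ to $S^3_{2n+2}(K\#J)$ via a Kirby diagram, choosing a particular $\mathrm{Spin}^c$ structure, and applying the Ozsv\'{a}th--Szab\'{o} $d$-invariant inequality together with Proposition~\ref{prop ni-wu}. Your proof instead works directly with the knot Floer complex: you use the K\"unneth formula $CFK^\infty(K\#J)\simeq CFK^\infty(K)\otimes CFK^\infty(J)$, the resulting inclusion $A_m^-(K)\otimes A_n^-(J)\hookrightarrow A_{m+n}^-(K\#J)$, and the characterization of $V_k$ as the $U$-exponent of the image of $(v_k)_*$. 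This handles all $m,n\ge 0$ uniformly, avoids any surgery or $\mathrm{Spin}^c$ bookkeeping, and makes transparent why the inequality is really a formal consequence of filtration subadditivity under tensor product. The paper's approach, by contrast, is more geometric and self-contained from the surgery-formula point of view, but at the cost of separate computations for the boundary cases. The two points you flag as needing care --- that the bifiltrations add under the K\"unneth identification, and that the natural map $H_*(A_m^-)\otimes H_*(A_n^-)\to H_*(A_m^-\otimes A_n^-)$ is compatible with $(v_m)_*\otimes(v_n)_*$ --- are indeed the places to be precise, but both are standard (the first is the content of the Ozsv\'{a}th--Szab\'{o} connected sum theorem, the second is naturality of the K\"unneth map over the PID $\mathbb{F}[U]$).
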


Here I would like to point out that in the proof of this proposition for the cases where $m=0$ or $n=0$, the authors of \cite{bodnar-celoria-golla} apply Ni-Wu's formula (i.e, Proposition \ref{prop ni-wu} in the present paper) to $S^3_0(K)$ and $S^3_0(J)$ without any comment, although Ni-Wu's formula is only proved for $S^3_{p/q}(K)$ with coprime $p,q>0$. 
For completeness, we give a proof of Proposition \ref{prop BCG} for the cases where $m=0$ or $n=0$.

\def\proofname{Proof of Proposition \ref{prop BCG}}
\begin{proof}
We first consider the case where $m=0$ and $n >0$.
Let $W$ be a 4-manifold represented by the relative diagram in Figure \ref{cob1},
which is an oriented cobordism from $S^3_2(K) \# S^3_{2n}(J)$ to $S^3_{2n+2}(K \# J)$.
(For details of relative diagrams, see \cite{gompf-stipsicz}.)  
Furthermore, we define $X$ to be a 4-manifold represented by the diagram in Figure \ref{cob1} with all brackets deleted, and $\widetilde{X}$ to be the closure of $X \setminus W$. It follows from elementary algebraic topology that $H_2(W;\mathbb{Z}) \cong \mathbb{Z}$ and 
$\sigma(W) = \sigma(X) -\sigma (\widetilde{X}) = -1$, hence $W$ is a negative definite.
We use the following lemma, which immediately follows from \cite[Theorem 9.6]{ozsvath-szabo}. 
\begin{lem}[Ozsv\'{a}th-Szab\'{o}]
\label{lem o-s}
Let $Y_1$ and $Y_2$ be rational homology 3-spheres and
$W$ a negative definite cobordism from $Y_1$ to $Y_2$.
Then for any Spin$^c$ structure $\frak{s}$ over $W$, we have the inequality
$$
c_1(\frak{s})^2 + \beta_2(W) \leq 4d(Y_2, \frak{s}|_{Y_2}) - 4d(Y_1, \frak{s}|_{Y_1}).
$$
\end{lem}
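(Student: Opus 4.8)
The plan is to recover the inequality from the grading shift of the maps on Heegaard Floer homology induced by the cobordism $W$. First recall that $W$, together with the $\mathrm{Spin}^{c}$ structure $\mathfrak{s}$, induces for each flavour $\circ\in\{+,\infty\}$ a homomorphism
$$
F^{\circ}_{W,\mathfrak{s}}\colon HF^{\circ}(Y_1,\mathfrak{s}|_{Y_1})\longrightarrow HF^{\circ}(Y_2,\mathfrak{s}|_{Y_2})
$$
which shifts the absolute $\mathbb{Q}$-grading by $\delta:=\tfrac14\bigl(c_1(\mathfrak{s})^2-2\chi(W)-3\sigma(W)\bigr)$, and that $F^{+}_{W,\mathfrak{s}}$ and $F^{\infty}_{W,\mathfrak{s}}$ commute with the natural projections $\pi^{+}\colon HF^{\infty}\to HF^{+}$. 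The image of $\pi^{+}$ is the ``tower'' $\mathcal{T}^{+}\subset HF^{+}$, and by definition $d(Y_i,\mathfrak{s}|_{Y_i})$ is the lowest grading occurring in $\mathcal{T}^{+}(Y_i,\mathfrak{s}|_{Y_i})$.

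The key input is \cite[Theorem 9.6]{ozsvath-szabo}: since $W$ is negative definite (so $b_2^{+}(W)=0$) and $Y_1,Y_2$ are rational homology spheres, $F^{\infty}_{W,\mathfrak{s}}$ is an isomorphism. I would feed this into the commutative square having $F^{\infty}_{W,\mathfrak{s}}$ on top, $F^{+}_{W,\mathfrak{s}}$ on the bottom, and the projections $\pi^{+}$ as vertical arrows: surjectivity of $F^{\infty}_{W,\mathfrak{s}}$ gives
$$
\mathcal{T}^{+}(Y_2,\mathfrak{s}|_{Y_2})=\pi^{+}\!\bigl(F^{\infty}_{W,\mathfrak{s}}(HF^{\infty}(Y_1,\mathfrak{s}|_{Y_1}))\bigr)=F^{+}_{W,\mathfrak{s}}\!\bigl(\mathcal{T}^{+}(Y_1,\mathfrak{s}|_{Y_1})\bigr),
$$
so $F^{+}_{W,\mathfrak{s}}$ carries the source tower onto the target tower. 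Since $F^{+}_{W,\mathfrak{s}}$ shifts grading by $\delta$, the bottom element of $\mathcal{T}^{+}(Y_2,\mathfrak{s}|_{Y_2})$, which lies in grading $d(Y_2,\mathfrak{s}|_{Y_2})$, is the image of a nonzero element of $\mathcal{T}^{+}(Y_1,\mathfrak{s}|_{Y_1})$, whose grading is therefore at least $d(Y_1,\mathfrak{s}|_{Y_1})$; hence
$$
\delta\ \le\ d(Y_2,\mathfrak{s}|_{Y_2})-d(Y_1,\mathfrak{s}|_{Y_1}).
$$

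Finally I would evaluate $\delta$. Negative definiteness gives $\sigma(W)=-\beta_2(W)$, and a short computation with the long exact sequence of the pair $(W,\partial W)$ and Poincar\'e--Lefschetz duality (using that $W$ is connected with $b_1(W)=0$ and that the two components of $\partial W=(-Y_1)\sqcup Y_2$ are rational homology spheres) gives $b_0(W)=1$, $b_3(W)=1$, $b_4(W)=0$, hence $\chi(W)=\beta_2(W)$. Substituting,
$$
\delta=\frac{c_1(\mathfrak{s})^2-2\beta_2(W)+3\beta_2(W)}{4}=\frac{c_1(\mathfrak{s})^2+\beta_2(W)}{4},
$$
so the preceding inequality becomes $c_1(\mathfrak{s})^2+\beta_2(W)\le 4d(Y_2,\mathfrak{s}|_{Y_2})-4d(Y_1,\mathfrak{s}|_{Y_1})$, as required. (If $b_1(W)>0$ the same computation gives $\chi(W)=\beta_2(W)-2b_1(W)$, which only strengthens the conclusion; in any case $b_1(W)=0$ for the cobordism $W$ used above.) The one substantial ingredient here is the isomorphism statement for $F^{\infty}$ under a negative definite cobordism, which is precisely what \cite[Theorem 9.6]{ozsvath-szabo} provides; everything else is bookkeeping with gradings and Euler characteristics, so I expect the only real delicacy to be keeping the orientation and absolute-grading conventions straight.
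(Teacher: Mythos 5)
Your argument is correct, but it is doing more work than the paper does: in the paper this lemma has no proof at all beyond the remark that it ``immediately follows from [Theorem 9.6]'' of Ozsv\'ath--Szab\'o, and indeed that theorem essentially \emph{is} the stated inequality (its hypothesis that $H^1(W;\mathbb{Z})\to H^1(Y_1;\mathbb{Z})$ be trivial is vacuous when $Y_1$ is a rational homology sphere, and $b_2^-(W)=\beta_2(W)$ for a negative definite cobordism between rational homology spheres). What you have written is instead a reconstruction of the \emph{proof} of that theorem: the one genuine input you use --- that $F^\infty_{W,\mathfrak{s}}$ is an isomorphism for a negative definite cobordism between rational homology spheres --- is not Theorem 9.6 itself but an earlier result in the same section of \cite{ozsvath-szabo} (their Proposition 9.4), so your citation is off by one statement even though the mathematics is right. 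The rest of your argument (the commutative square with $\pi^+$, the identification of the towers, the degree shift $\frac14(c_1(\mathfrak{s})^2-2\chi(W)-3\sigma(W))$, and the computation $\chi(W)=\beta_2(W)$, $\sigma(W)=-\beta_2(W)$ when $b_1(W)=0$) is exactly the standard derivation and checks out. One caveat: your parenthetical about $b_1(W)>0$ only repairs the Euler characteristic count, not the claim that $F^\infty_{W,\mathfrak{s}}$ is an isomorphism, which is the step that actually needs care when $b_1(W)>0$ (Ozsv\'ath--Szab\'o reduce to $b_1=0$ by surgering out loops); since the cobordisms used in this paper have $b_1=0$, this does not affect the application, but as a proof of the lemma as stated it leaves that case open.
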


\begin{figure}[htbp]
\hspace{-1mm}
\includegraphics[scale = 0.7]{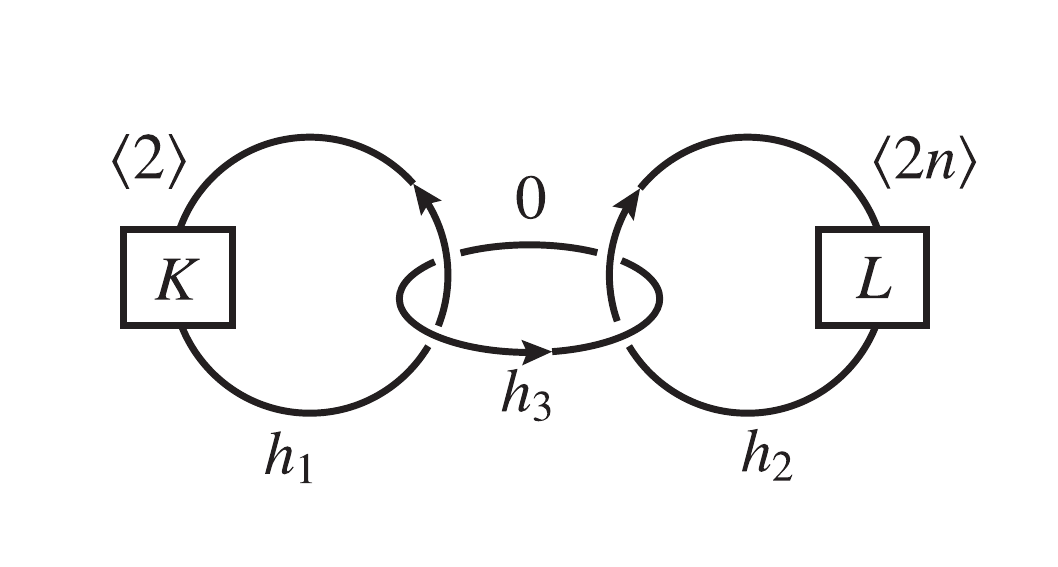}
\caption{\label{cob1}}
\hspace{-1mm}
\includegraphics[scale = 0.7]{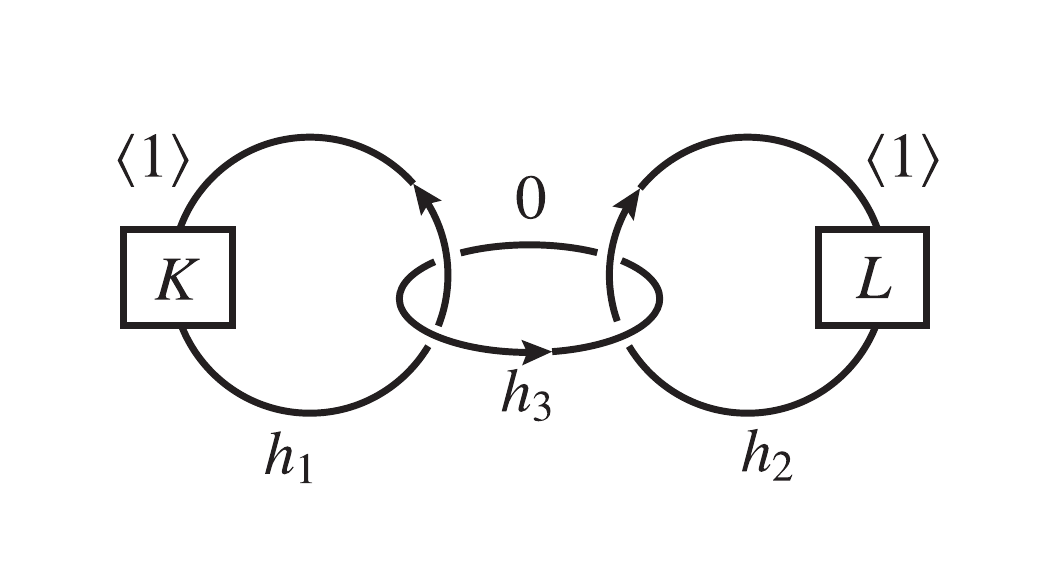}
\caption{\label{cob2}}
\end{figure}

To apply Lemma \ref{lem o-s} to $W$, we take a Spin$^c$ structure over $W$ as follows.
By choosing the order and the orientation of generators of $H_2(X; \mathbb{Z})$ as shown in Figure \ref{cob1}, we have a representation matrix
$$
\left(
\begin{array}{ccc}
2 & 0 & 1\\
0 & 2n & 1\\
1 & 1 & 0
\end{array}
\right)
$$
for the intersection form 
$Q_X: H_2(X;\mathbb{Z}) \times H_2(X;\mathbb{Z}) \to \mathbb{Z}$.
Let $h^*_i$ denote the cocore of $h_i$ ($i=1,2,3$).
We take a Spin$^c$ structure $\frak{s}$ over $X$ such that 
$PD(c_1(\frak{s})) = (-2,0,0) \in H_2(X, \partial X; \mathbb{Z})$ with respect to the basis $\{h^*_1, h^*_2, h^*_3 \}$.
For the restriction of $\frak{s}$ to $W$, we have the inequality
\begin{eqnarray}
\label{ineq1}
c_1(\frak{s}|_W)^2 + 1 & \leq & 
4d(S^3_{2}(K\#J), \frak{s}|_{S^3_{2n+2}(K\#J)})\\
\ & \ & -4d(S^3_{2}(K), \frak{s}|_{S^3_{2}(K)}) -4d(S^3_{2n}(J), \frak{s}|_{S^3_{2n}(J)}).
\nonumber
\end{eqnarray}
Here we compute $c_1(\frak{s}|_W)^2$.
Let $Q_{\widetilde{X}}$ and $Q_W$
denote the intersection form of $\widetilde{X}$ and $W$ respectively.
Since the inclusion maps induce an isomorphism 
$H_2(X;\mathbb{Q}) \cong H_2(\widetilde{X};\mathbb{Q}) \oplus H_2(W;\mathbb{Q})$ 
and $Q_X$ decomposes as $Q_{\widetilde{X}} \oplus Q_{W}$ over $\mathbb{Q}$, we have
$$
c_1(\frak{s})^2 = c_1(\frak{s}|_{\widetilde{X}})^2 + c_1 (\frak{s}|_W)^2. 
$$
Furthermore, it is not hard to check that $c_1(\frak{s})^2 = 2/(n+1)$  and $c_1(\frak{s}|_{\widetilde{X}})^2 = 2$.
Hence we have $c_1(\frak{s}|_W)^2 = -2n/(n+1)$.
Next we consider which integers correspond to $\frak{s}|_{S^3_2(K)}$, 
$\frak{s}|_{S^3_{2n}(J)}$ and $\frak{s}|_{S^3_{2n+2}(K \# J)}$ respectively.
For a knot $L$ and $k \in \mathbb{Z}$, let $X_k(L)$ denote a 4-manifold obtained by attaching a single 2-handle $h_{L}^2$ to $B^4$ along a knot $K\#L \subset \partial B^4$ with framing $k$. In addition, let $F_{L}$ denote a closed surface obtained by gluing the core of $h^2_{L}$ 
to a Seifert surface for $L$.
Then we note that $\widetilde{X}$ and
$X$ are diffeomorphic to $X_2(K) \natural X_{2n}(J)$ and
$X_{2n+2}(K \# J) \# S^2 \times S^2$ respectively
(where $\natural$ denotes boundary connected sum).
Furthermore, we can see that $[F_{K}]=(1,0,0)$, $[F_{J}]=(0,1,0)$ and
$[F_{K \# J}]=(1,-1,2n)$
with respect to the basis $\{ [h_1], [h_2], [h_3]\}$.
Hence we have 
$$
\langle c_1 (\frak{s}), [F_{K}] \rangle = -2,
\langle c_1 (\frak{s}), [F_{J}] \rangle = 0
\text{ and }
\langle c_1 (\frak{s}), [F_{K \# J}] \rangle = -2.
$$
This implies that 
$\frak{s}|_{S^3_2(K)}$, 
$\frak{s}|_{S^3_{2n}(J)}$
and $\frak{s}|_{S^3_{2n+2}(K\# J)}$ are 
identified with $0$, $n$ and $n$ respectively.
Now we can reduce the inequality $(\ref{ineq1})$ to 
\begin{eqnarray*}
-\frac{2n}{n+1} + 1 &\leq& 4d(S^3_{2n+2}(O),n) - 4d(S^3_{2}(O),0) - 4d(S^3_{2n}(O),n)\\
\ &\ & - 8V_{n}(K\#J) + 8V_0(K)  + 8V_{n}(J).
\end{eqnarray*}

Since $4d(S^3_{2n+2}(O),n) = 1-2n/(n+1)$, $4d(S^3_{2}(O),0) = 1$ and $4d(S^3_{2n}(O),n)= -1$,
we have the desired inequality 
$V_n(K\#J) \leq V_0(K) + V_n(J)$.

For the case where $m=n=0$, it suffices to apply the above argument after replacing Figure \ref{cob1} with Figure \ref{cob2}, taking a Spin$^c$ structure $\frak{s}$ over $X$
such that $PD(c_1(\frak{s})) = (1,1,2)$. 
\end{proof}

In this paper, we only need Proposition \ref{prop BCG} in the case where $m=n=0$, which is stated as follows.

\begin{prop}
\label{m=n=0}
For any two knots $K$ and $J$, we have
$$
V_{0}(K \# J) \leq V_0(K) + V_0(J).
$$
\end{prop}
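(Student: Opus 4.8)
The plan is to obtain Proposition~\ref{m=n=0} as the special case $m = n = 0$ of Proposition~\ref{prop BCG}. That proposition asserts $V_{m+n}(K \# J) \leq V_m(K) + V_n(J)$ for all $m, n \in \mathbb{Z}_{\geq 0}$, so substituting $m = n = 0$ yields $V_0(K \# J) \leq V_0(K) + V_0(J)$ directly. The only case of Proposition~\ref{prop BCG} that is invoked here, namely $m = n = 0$, has just been established above by means of the negative definite cobordism of Figure~\ref{cob2}, the $d$-invariant estimate of Lemma~\ref{lem o-s}, and Ni-Wu's surgery formula (Proposition~\ref{prop ni-wu}); hence the proof of Proposition~\ref{m=n=0} is a one-line citation and there is no real obstacle to overcome.

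For completeness I indicate how one could argue without appealing to Proposition~\ref{prop BCG}. Recall the mapping cone description of the $V_k$-invariants from \cite{ni-wu}: for $N \gg 0$ one has $d(S^3_N(K),[0]) = d(S^3_N(O),[0]) - 2V_0(K)$, so $V_0(K)$ records the number of levels of the $U$-tower killed by the vertical map $v_0^K \colon A_0^+(K) \to B^+$ (compare Lemma~\ref{lem2}). Using the K\"unneth splitting of the knot Floer complex under connected sum to identify $A_0^+(K \# J)$ with the relevant filtered subquotient of $A_0^+(K) \otimes A_0^+(J)$, the tensor product of a map killing $V_0(K)$ levels with a map killing $V_0(J)$ levels kills at most $V_0(K) + V_0(J)$ levels, which gives the inequality. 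The main difficulty in this alternative route is tracking the Alexander filtrations carefully enough to make the tensor-product estimate precise — which is exactly what the cobordism computation above (following Bodn\'{a}r-Celoria-Golla) circumvents — so in practice we simply quote Proposition~\ref{prop BCG}.
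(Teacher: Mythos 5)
Your proposal is correct and matches the paper exactly: Proposition~\ref{m=n=0} is simply the case $m=n=0$ of Proposition~\ref{prop BCG}, which the paper has just proved directly via the cobordism of Figure~\ref{cob2}. The alternative tensor-product sketch in your second paragraph is unnecessary (and not fully rigorous as written), but you correctly flag it as optional, so it does no harm.
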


We can use Proposition \ref{m=n=0} to give a lower bound for $V_0$ of the connected sum of two knots as well. In particular, we have the following lemma.
\begin{lem}
\label{lem4}
For any two knots $K$ and $J$, we have
$$
V_0 (K \# J^*) \geq V_0(K) - V_0(J).
$$
\end{lem}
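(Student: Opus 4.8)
The plan is to apply the subadditivity statement of Proposition \ref{m=n=0} in a slightly indirect way, using the fact that $J^* \# J$ is smoothly slice. First I would observe that the knot $(K \# J^*) \# J$ is, by commutativity and associativity of connected sum, isotopic to $K \# (J \# J^*)$, and since $J \# J^*$ bounds a smooth slice disk in $B^4$, this knot is smoothly concordant to $K$. Because $V_0$ is a smooth concordance invariant (it is one of the Ni--Wu $V_k$-invariants, as recalled above), this gives $V_0((K \# J^*) \# J) = V_0(K)$.

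Next I would apply Proposition \ref{m=n=0} to the pair of knots $K \# J^*$ and $J$, obtaining
$$
V_0\bigl((K \# J^*) \# J\bigr) \leq V_0(K \# J^*) + V_0(J).
$$
Combining this with the identity $V_0((K \# J^*) \# J) = V_0(K)$ from the previous step yields $V_0(K) \leq V_0(K \# J^*) + V_0(J)$, and rearranging gives the desired inequality $V_0(K \# J^*) \geq V_0(K) - V_0(J)$.

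There is essentially no hard step here; the only thing to be careful about is the bookkeeping that $V_0$ vanishes precisely on (indeed is invariant under) slice knots and concordance, so that the slice knot $J \# J^*$ can be absorbed. If one wanted to avoid invoking concordance invariance directly, one could instead note $V_0(J \# J^*) = 0$ (since $J \# J^*$ is slice, so its $d_1$-invariant vanishes and $V_0 = -\tfrac12 d_1$ by Lemma \ref{lem2}) and apply Proposition \ref{m=n=0} twice, but the concordance-invariance route is cleanest.
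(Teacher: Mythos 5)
Your proof is correct and is essentially identical to the paper's: both apply Proposition \ref{m=n=0} with $K$ replaced by $K \# J^*$ and then use that $K \# J^* \# J$ is concordant to $K$ together with the concordance invariance of $V_0$. (One small caution about your closing aside: $V_0$ vanishes \emph{on} slice knots but not \emph{precisely on} them --- $V_0(K)=0$ does not imply $K$ is slice --- though this does not affect the argument, which only uses concordance invariance.)
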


\def\proofname{Proof}
\begin{proof}
For the inequality in Proposition \ref{m=n=0}, by replacing $K$with $K \# J^*$, we have
$$
V_0(K \# J^* \# J) \leq V_0(K \# J^*) + V_0(J).
$$
Since $K \# J^* \# J$ is concordant to $K$, we have $V_0 (K \# J^* \# J) = V_0(K)$.
This completes the proof.
\end{proof}

\section{Proof of the main theorems}
In this section, we prove Proposition \ref{prop2}, Theorem \ref{thm1} and Theorem \ref{thm2}.
We first prove Proposition \ref{prop2} and Theorem \ref{thm1}.

\def\proofname{Proof of Proposition \ref{prop2}}
\begin{proof}
Suppose that two given knots $K$ and $J$ satisfy $V_0(K) > V_0(J)$ and $\tau(K), \tau(J) > 0$. Fix a positive integer $n$ with $\tau(K) < n\cdot \tau(J)$.
Then Lemma \ref{q=1} and Lemma \ref{lem4} imply that
$$
V_0(K\# (J_{n,1})^*) \geq V_0(K) -V_0(J_{n,1})= V_0(K) - V_0(J)>0.
$$
Hence by Lemma \ref{lem2}, we have $d_1(K \# (J_{n,1})^*) = -2V_0(K \# (J_{n,1})^*) \neq 0$.
Furthermore,  by \cite[Theorem 1.2]{hedden}, we have
$$
\tau(K \# (J_{n,1})^*) = \tau(K) - \tau(J_{n,1}) \leq \tau(K) - n\cdot\tau(J)<0.
$$
Hence it follows from Proposition \ref{prop4} that $K \# (J_{n,1})^*$ is not slice in any definite 4-manifold.
\end{proof}

\def\proofname{Proof of Theorem \ref{thm1}}
\begin{proof}
For a knot $K$, let $\Wh(K)$ denote the positively-clasped untwisted Whitehead double of $K$. Then we set
$$K_n:= (\#_{i=1}^{3} \Wh(T_{2,3})) \# ((\Wh(T_{2,3}))_{n+3,1})^*$$
for any positive integer $n$.
Since the Alexander polynomial of $K_n$ equals 1, $K_n$ is topologically slice for any $n$.
\cite{freedman, freedman-quinn}

We first prove that $K_n$ is not slice in any definite 4-manifold. By Lemma \ref{q=1} and Lemma \ref{lem4}, we have
$$
V_0(K_n) \geq V_0(\#_{i=1}^3 \Wh(T_{2,3})) -V_0(\Wh(T_{2,3}))
$$
for any $n$.
As mentioned in \cite[Section 3]{kim-park}, it is proved in \cite[Theorem B.1]{hedden-kim-livingston} that $\#_{i=1}^k \Wh(T_{2,3})$ is $\nu^+$-equivalent to $T_{2,2k+1}$ for any $k>0$.
(Here, knots $K$ and $J$ being $\nu^+$-equivalent means that the equalities 
$\nu^+(K\#J^*) =\nu^+(J \# K^*)=0$ hold.)
Hence it follows from \cite[Lemma 3.1]{kim-park}, the definition of $V_k$ and 
\cite[Corollary 1.5]{ozsvath-szabo2} that
$V_0(\#_{i=1}^k \Wh(T_{2,3})) = V_0(T_{2,2k+1}) = \lceil \frac{k}{2} \rceil$. This implies that
$$
V_0(K_n) \geq \Big\lceil \frac{3}{2} \Big\rceil - \Big\lceil \frac{1}{2} \Big\rceil =1>0.
$$
In particular, we have $d_1(K_n) \neq 0$.
Moreover, it follows from \cite[Theorem 1.5]{hedden2} that $\tau(\Wh(T_{2,3})) =1$,
and hence \cite[Theorem 1.2]{hedden} implies that
$$
\tau(K_n) = 3 - (n+3) = -n <0.
$$
Therefore, it follows from Proposition \ref{prop4} that for any $n>0$, the knot $K_n$ is not slice in any definite 4-manifold.

Next we prove that the knots $\{K_n\}_{n \in \mathbb{Z}_{>0}}$ are linearly independent.
Suppose that a linear combination $m_1[K_{n_1}]+ \cdots + m_k[K_{n_k}]$ equals zero in $\mathcal{C}$ (where we may assume that $0<n_1 < n_2 < \ldots < n_k$). Then we have the equality
\begin{equation}
\label{eq2}
3(\Sigma_{i=1}^{k}m_i)[\Wh(T_{2,3})] =m_1[(\Wh(T_{2,3}))_{n_1+3,1}] + \cdots 
+m_k [(\Wh(T_{2,3}))_{n_k+3,1}]. 
\end{equation}
In the proof of \cite[Therorem A]{kim-park}, the authors define a homomorphism 
$\phi: \mathcal{C} \to \mathbb{Z}^{\infty}$ and show that 
$\phi([(\Wh(T_{2,3}))_{n+3,1}])=(*, \cdots, *, 1, 0, 0, \cdots)$ where 1 is the 
$(n+2)^{\text{nd}}$ coordinate.
Hence we can see that the 
$(n_k +2)^{\text{nd}}$ coordinate of $\phi(\text{RHS of (\ref{eq2})})$ is $m_k$.
On the other hand, we can verify that 
$\phi ([\Wh(T_{2,3})]) = \phi([T_{2,3}]) = (0,0, \cdots)$, and hence
$m_k$ must be 0. Inductively, we have $m_1 = \cdots = m_k = 0$.
This completes the proof.
\end{proof}

\begin{remark}
If we do not require knots in Theorem \ref{thm1} to be topologically slice, then the existence of such a family can be established using the following proposition, which immediately follows from \cite[Proposition 1.2]{cochran-harvey-horn}.

\begin{prop}
If the Levine-Tristram signature of a knot $K$ has both positive and negative values,
then $K$ is not smoothly slice in any definite 4-manifold.
\end{prop}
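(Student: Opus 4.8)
The plan is to deduce this, essentially as a contrapositive, from \cite[Proposition 1.2]{cochran-harvey-horn} after an orientation-reversal reduction. Write $\sigma_{\omega}(J)$ for the Levine--Tristram signature of $J$, i.e.\ the signature of the Hermitian form $(1-\omega)A+(1-\overline{\omega})A^{T}$ attached to a Seifert matrix $A$ of $J$. Two elementary facts underlie the reduction: $\sigma_{\omega}(J^{*})=-\sigma_{\omega}(J)$ (since $-A^{T}$ is a Seifert matrix for the mirror $J^{*}$), and reversing the orientation of a $4$-manifold $W$ with $\partial W=S^{3}$ turns a negative definite intersection form into a positive definite one and turns a null-homologous disk for $J$ into a null-homologous disk for $J^{*}$. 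The input I would import from \cite[Proposition 1.2]{cochran-harvey-horn} is: \emph{if $J$ bounds a smooth null-homologous disk in a negative definite $4$-manifold with boundary $S^{3}$, then $\sigma_{\omega}(J)\ge 0$ for all $\omega\in S^{1}\setminus\{1\}$.} Combined with the two facts above, this also gives $\sigma_{\omega}(J)\le 0$ for all $\omega$ when $J$ bounds such a disk in a positive definite $4$-manifold.

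Granting this, the argument is immediate. Suppose toward a contradiction that $K$ has $\sigma_{\omega}(K)>0$ for some $\omega$ and $\sigma_{\omega'}(K)<0$ for some $\omega'$, and that $K$ is slice in a definite $4$-manifold $V$ with $\partial V=S^{3}$. If $V$ is negative definite, the imported statement forces $\sigma_{\omega'}(K)\ge 0$, a contradiction; if $V$ is positive definite, it forces $\sigma_{\omega}(K)\le 0$, again a contradiction.

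Two points deserve a check, both concerning the exact strength of the imported statement. First, \cite[Proposition 1.2]{cochran-harvey-horn} may be phrased only for prime-power roots of unity $\omega$; one then promotes it to all $\omega\in S^{1}\setminus\{1\}$ using that $\omega\mapsto\sigma_{\omega}(K)$ is locally constant away from the finitely many zeros of $\Delta_{K}$ on $S^{1}$, together with density of the $2^{k}$-th roots of unity in $S^{1}$. Second, one should confirm that a $4$-manifold $V$ as in our definition of sliceness meets the hypotheses of that proposition; this is precisely where $[D,\partial D]=0$ enters. For orientation (I would not reprove this), the content of \cite[Proposition 1.2]{cochran-harvey-horn} is a cyclic-branched-cover computation: for a prime power $d$, the $d$-fold branched cover $W_{d}\to V$ along the disk $D$ has $\partial W_{d}=\Sigma_{d}(K)$ and $\sigma(W_{d})=d\,\sigma(V)+\sum_{j=1}^{d-1}\sigma_{\omega_{d}^{j}}(K)$, the self-intersection correction dropping out because capping $D$ off by a pushed-in Seifert surface yields a null-homologous surface in $V\cup_{S^{3}}(-B^{4})$ once $[D,\partial D]=0$; definiteness of $V$ then constrains the signs of the $\sigma_{\omega}(K)$. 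The step I expect to be genuinely delicate—already carried out in \cite{cochran-harvey-horn}—is bounding the homological error terms sharply enough to reach a pointwise, rather than merely averaged, sign statement on the individual $\sigma_{\omega}(K)$.
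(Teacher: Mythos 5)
Your proposal is correct and follows essentially the same route as the paper, which offers no argument beyond the assertion that the statement ``immediately follows from \cite[Proposition 1.2]{cochran-harvey-horn}'': you import that proposition as a one-sided sign constraint on the Levine--Tristram signatures of knots slice in a negative definite $4$-manifold, pass to the positive definite case by mirroring and orientation reversal, and conclude by the sign dichotomy. Note that the final contradiction is insensitive to which sign convention \cite{cochran-harvey-horn} actually uses, so your (appropriately flagged) uncertainty there is harmless.
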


Indeed, we can take $J_k:=\{ T_{2,2k+9}\#(\#_{i=1}^{k+5}T_{2,3})^* \}_{k \in \mathbb{Z}_{>0}}$
as the concrete sequence.
(we can verify that $\sigma_{J_k}(e^{i\theta})= -2$ for 
$\theta \in(\frac{\pi }{2k+9}, \frac{3\pi }{2k+9})$ and 
$\sigma_{J_k}(-1)= 2$.
Furthermore, since all torus knots are linearly independent in $\mathcal{C}$ \cite{litherland},
the knots $J_k$ are also linearly independent.)
\end{remark}

Finally we prove Theorem \ref{thm2}. To do so, we use 
the following observation relating kinkiness to $\nu^+$ and $\tau$.

\begin{lem}
\label{lem7}
For any knot $K$, we have the inequalities
$$
\nu^+(K) \leq k^+(K)
$$
and
$$
-k^-(K) \leq \tau(K) \leq k^+(K). 
$$
\end{lem}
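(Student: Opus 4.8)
The plan is to produce, from a self-transverse immersed disk in $B^4$ realizing the kinkiness of $K$, a definite cobordism (or a cobordism that splits off a copy of $\mathbb{C}P^2$ or $\overline{\mathbb{C}P^2}$) relating $S^3_{+1}(K)$ or $S^3_{-1}(K)$ to $S^3$, and then feed the resulting $d$-invariant inequality into Lemma~\ref{lem2} (which identifies $d_1(K)=-2V_0(K)$) and the relation $\tau(K)\le\nu^+(K)$. Concretely, suppose $D\subset B^4$ is an immersed disk with boundary $K$ having exactly $k^+(K)$ positive and $k^-(K)$ negative double points. First I would resolve each double point by the standard trick: each positive (respectively negative) self-intersection can be traded, after blowing up, for a transverse intersection with an exceptional sphere of square $-1$ (respectively $+1$); equivalently, tubing along small disks shows that $K$ bounds an embedded disk in $B^4\mathbin{\#}k^+(K)\,\overline{\mathbb{C}P^2}\mathbin{\#}k^-(K)\,\mathbb{C}P^2$ whose homology class is determined by the framings. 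Attaching a $(+1)$-framed $2$-handle along $K$ and capping the disk with the core, exactly as in the proof of Lemma~\ref{lem1}, yields a closed-up $4$-manifold containing a sphere whose self-intersection is controlled, and a bookkeeping of the intersection form lets one peel off the $\mathbb{C}P^2$ summands, leaving a negative definite manifold bounding $S^3_{+1}(K)$ once $k^+(K)=0$, and more generally a manifold whose "positive part" has rank $k^+(K)$.

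The key steps, in order: (1) set up the blow-up/resolution picture carefully enough to track the homology class of the resulting embedded disk and the framing of the $2$-handle; (2) cap off to get a closed $4$-manifold $W$ with $b_2^+(W)$ and $b_2^-(W)$ computed in terms of $k^{\pm}(K)$ and the surgery coefficient; (3) split $W$ as $W'\mathbin{\#}(\text{copies of }\mathbb{C}P^2)$ using the embedded sphere(s) of square $+1$, reducing $b_2^+$ by one each time, so that $W'$ is a $4$-manifold with $b_2^+(W')=0$ bounding $S^3_{+1}(K)$; (4) apply Ozsv\'ath--Szab\'o's bound (Theorem~\ref{thm d_1}, or rather the refined inequality in Lemma~\ref{lem o-s}) to get $d_1(K)=d(S^3_{+1}(K))\ge -2k^+(K)$, hence by Lemma~\ref{lem2} that $V_0(K)\le k^+(K)$; since $V_0(K)=0$ forces $\nu^+(K)=0$ and in general the inequality $V_0(K)\le k^+(K)$ is exactly the statement $\nu^+(K)\le k^+(K)$ after one checks the surgery formula bounds $\nu^+$ correctly, one concludes $\nu^+(K)\le k^+(K)$; (5) combine with $\tau(K)\le\nu^+(K)$ (from \cite{hom-wu}) to get $\tau(K)\le k^+(K)$; (6) apply the same argument to the mirror $K^*$, using $\tau(K^*)=-\tau(K)$ and $k^+(K^*)=k^-(K)$ (since mirroring reverses orientation and swaps positive and negative double points of an immersed disk), to obtain $-k^-(K)=-k^+(K^*)\le-\nu^+(K^*)\le-\tau(K^*)=\tau(K)$, i.e. $-k^-(K)\le\tau(K)$.

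The first inequality $\nu^+(K)\le k^+(K)$ is arguably the more delicate half; rather than the somewhat indirect blow-up argument above one may prefer to cite the known fact that $\nu^+$ is subadditive under the operation of adding a positive crossing (a full twist with a band), or to observe directly that a disk in $B^4$ with only positive double points, after resolving, exhibits $K$ as bounding a disk in a negative definite manifold with one boundary component, which by Lemma~\ref{lem o-s} applied to the cobordism $B^4\setminus(\text{neighborhood of the disk})$ forces $V_0(K)\le k^+(K)$ and then an induction on $k$ using the $V_k$-step inequalities $V_k-1\le V_{k+1}\le V_k$ gives $\nu^+(K)\le k^+(K)$. The second inequality then follows purely formally.

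The main obstacle I expect is bookkeeping the Spin$^c$/homology data in step (3)--(4) precisely enough: one must ensure that after splitting off the $\mathbb{C}P^2$ summands the leftover manifold $W'$ genuinely has $b_2^+(W')=0$ and that the relevant Spin$^c$ structure restricts to the torsion Spin$^c$ structure on $S^3_{+1}(K)$, so that $d(S^3_{+1}(K))=d_1(K)$ and Theorem~\ref{thm d_1}/Lemma~\ref{lem o-s} apply without a correction term; handling the case $k^+(K)>0$ (as opposed to the $k^+(K)=0$ case already essentially done in Lemma~\ref{lem1}) is where the extra care is needed.
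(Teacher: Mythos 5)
Your overall strategy (blow up the double points, cap off with a $2$-handle, and run the Ozsv\'ath--Szab\'o $d$-invariant inequality) is genuinely different from the paper's, but as written it has two gaps, one of which is a logical error rather than deferred bookkeeping. The fatal step is the passage from $V_0(K)\le k^+(K)$ to $\nu^+(K)\le k^+(K)$. The step inequalities $V_k(K)-1\le V_{k+1}(K)\le V_k(K)$ give a \emph{lower} bound $\nu^+(K)\ge V_0(K)$, not an upper bound: a knot with $V_0=1$ can perfectly well have $\nu^+=2$ (e.g.\ $T_{2,5}$, where $V_0=1$ while $\nu^+=\tau=2$), so no induction on these inequalities can convert a bound on $V_0$ into a bound on $\nu^+$. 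To control $\nu^+(K)=\min\{k \mid V_k(K)=0\}$ you must bound $V_{k^+(K)}(K)$ directly, which in the surgery picture means identifying the correct Spin$^c$ structure on a large surgery $S^3_N(K)$ --- a computation of the type carried out in the proof of Proposition \ref{prop BCG}, and one your sketch does not attempt. The second gap is that the manifold you build is not negative definite once $k^-(K)>0$: each resolved negative double point contributes a $+1$ to the intersection form, so $b_2^+\ge k^-(K)$ and neither Theorem \ref{thm d_1} nor Lemma \ref{lem o-s} applies as stated; moreover, after blowing up, the disk is no longer null-homologous (its class is $\pm 2$ times each exceptional class), so even on a definite piece the inequality requires an honest computation of $c_1(\mathfrak{s})^2$ rather than the untouched argument of Lemma \ref{lem1}. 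You flag this as ``bookkeeping,'' but it is exactly where the content lies.

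The paper avoids all of this by staying essentially $3$-dimensional: by \cite[Proposition 2.1]{owens-strle}, $K$ is concordant to a knot that becomes slice after $k^+(K)$ positive and finitely many negative crossing changes, and the crossing-change monotonicity of $\nu^+$ and $\tau$ (\cite[Theorem 1.3]{bodnar-celoria-golla}, \cite[Corollary 1.5]{ozsvath-szabo3}) --- each positive crossing change moves the invariant by $0$ or $1$, and the negative ones cannot increase it in the relevant direction --- immediately gives $\nu^+(K)\le k^+(K)$ and $\tau(K)\le k^+(K)$. The mirror argument, which you do carry out correctly, then yields $-k^-(K)\le\tau(K)$. If you want to salvage the $4$-dimensional route, note that you would essentially be reproving the crossing-change monotonicity of $\nu^+$, since a single crossing change is realized by precisely the blow-up cobordism you describe; it is cleaner to cite that monotonicity and let Owens--Strle convert the immersed-disk data into crossing-change data.
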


\def\proofname{Proof}
\begin{proof}
If a knot  $K_1$ is deformed into $K_2$ by a crossing change 
from a positive crossing (Figure \ref{pos}) to a negative crossing (Figure \ref{neg})
(resp.\ from a negative crossing to a positive crossing),
then we say that $K_1$ is deformed into $K_2$
by a {\it positive (resp.\ negative) crossing change}. 
It is proved in \cite[Theorem 1.3]{bodnar-celoria-golla} and \cite[Corollary 1.5]{ozsvath-szabo3} that if a knot  $K_+$ is deformed into $K_-$ by a positive crossing change,
then we have
$$
\nu^+(K_-) \leq \nu^+ (K_+) \leq \nu^+ (K_-) + 1
$$
and
$$
\tau(K_-) \leq \tau(K_+) \leq \tau(K_-)+1.
$$
Furthermore, it follows from \cite[Proposition 2.1]{owens-strle}
that for any knot $K$, there exists a knot $J$ so that $J$ is concordant to $K$ and $J$ can be deformed into a slice knot $L$ by just $k^+(K)$ positive crossing changes 
and finitely many negative crossing changes.
These imply that 
$$
\nu^+(K) = \nu^+(J) \leq \nu^+(L) + k^+(K) = k^+(K)
$$
and
$$
\tau(K) = \tau(J) \leq \tau(L) + k^+(K) = k^+(K).
$$

By applying the same argument to $K^*$, we have
$$
-\tau(K)=\tau(K^*) \leq k^+(K^*) = k^-(K).
$$
\end{proof}

\begin{figure}[htbp]
\begin{minipage}[]{0.4\hsize}
\hspace{-1mm}
\includegraphics[scale = 0.7]{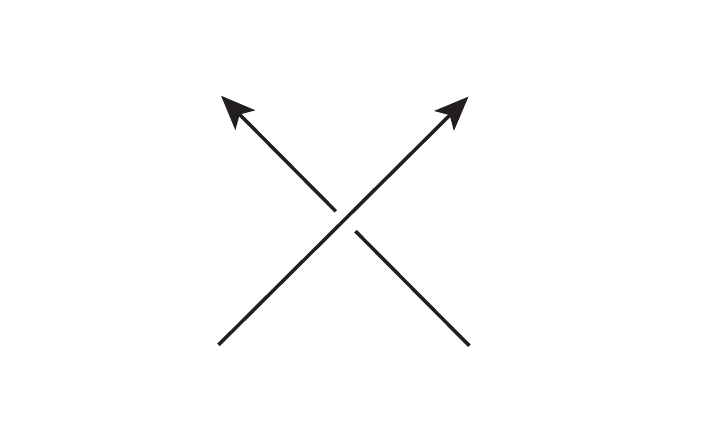}
\caption{\label{pos}}
 \end{minipage}
 \begin{minipage}[]{0.4\hsize}
\hspace{-4mm}
\includegraphics[scale = 0.7]{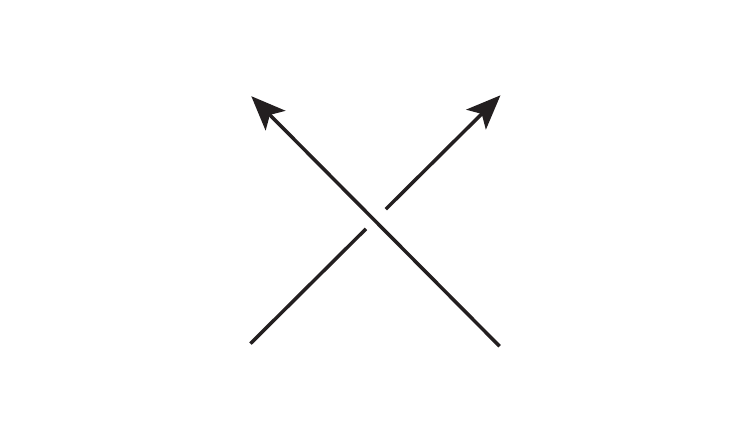}
\caption{\label{neg}}
 \end{minipage}
\end{figure}

\def\proofname{Proof of Theorem \ref{thm2}}
\begin{proof}
For positive integers $k$ and $l$, we define $K_{k,l}$ by 
$$
K_{k,l}:= (\#_{i=1}^{2k+1}\Wh (T_{2,3})) \# ((\Wh(T_{2,3}))_{l+2k+1,1})^*.
$$
Obviously, $K_{k,l}$ is topologically slice for any $k,l>0$.
We prove that for any $m,n \in \mathbb{Z}_{>0}$,  $\{K_{m,l}\}_{l \geq n}$ are mutually distinct  
and each of them satisfies $k^+(K_{m,l}) \geq m$ and $k^-(K_{m,l}) \geq n$.

By applying the argument in the proof of Theorem \ref{thm1}, we have
$$
V_0(K_{k,l})\geq V_0(\#_{i=1}^{2k+1}\Wh (T_{2,3})) - V_0((\Wh(T_{2,3}))_{l+2k+1,1})=k
$$
and
$$
\tau(K_{k,l}) = 2k+1 - (l+2k+1) = -l.
$$
In particular, $K_{k,l}$ is not equal to $K_{k,l'}$ if $l \neq l'$.
Furthermore, since $\nu^+(K) = \min \{i\in \mathbb{Z}_{\geq0}|V_i(K)=0\}$ and 
$V_{i+1}(K) \geq V_i(K) -1$,
we have $\nu^+(K_{k,l}) \geq k$.
Hence Lemma \ref{lem7} proves that $k^+(K_{k,l}) \geq k$ and $k^-(K_{k,l}) \geq l$.
This completes the proof.
\end{proof}

\end{document}